\newcommand*\owedge{\mathpalette\@owedge\relax}
\newcommand*\@owedge[1]{%
  \mathbin{%
    \ooalign{%
      $#1\m@th\bigcirc$\cr
      \hidewidth$#1\m@th\wedge$\hidewidth\cr
    }%
  }%
}
\newcommand{\ol}{\overline}
\DeclareMathOperator{\vol}{vol}
\newcommand{\Rm}{\mathrm{Rm}}
\newtheorem{Thm}{Theorem}[section]
\newtheorem{Prop}[Thm]{Proposition}
\newtheorem{Cor}[Thm]{Corollary}
\newtheorem*{Rem}{Remark}
\numberwithin{equation}{section}
\newtheorem*{ThmA}{Theorem A}
\newtheorem*{ThmA'}{Theorem A'}
\newtheorem*{ThmB}{Theorem B}
\begin{document}

\newcommand{\Rn}{{\bold R}^n}
\newcommand{\R}{\bold R}

\title{Some aspects of Ricci flow on the $4$-sphere}
\author{Sun-Yung Alice Chang}
\address{Princeton University, Princeton NJ 08544 USA}
\email{chang@math.princeton.edu}
\author{Eric Chen}
\thanks{The second author was supported by an AMS--Simons travel grant.}
\address{University of California, Santa Barbara CA 93106-3080 USA}
\curraddr{University of California, Berkeley CA 94720-3840 USA}
\email{ecc@math.berkeley.edu}
\date{\today}

\begin{abstract}
In this paper, on 4-spheres equipped with Riemannian metrics we study some integral conformal invariants, the sign and size of which under Ricci flow characterize the standard 4-sphere. We obtain a conformal gap theorem, and for Yamabe metrics of positive scalar curvature with $L^2$ norm of the Weyl tensor of the metric suitably small, we establish the monotonic decay of the $L^p$ norm for certain $p>2$ of the reduced curvature tensor 
along the normalized Ricci flow, with the metric converging exponentially to the standard 4-sphere.
\end{abstract}

\maketitle

Throughout my career, I (the first author Alice Chang) have long admired
Vaughan Jones from a distance for the ingenuity of his works in
mathematics and for his open, earnest attitude during social encounters.
The conversations we shared in our brief meetings at UCLA, Berkeley,
at a summer school in New Zealand, and during the long bus ride from San
Pablo to Rio at ICM 2018 --- each has left a vivid memory. What a bright star
in our profession!

\section{Introduction}

We start by recalling some
earlier works of Chang--Gursky--Yang \cite{CGY03}.

Under what conditions on the curvature can we conclude that a smooth, closed 
Riemannian manifold is diffeomorphic (or homeomorphic) to the sphere?
A result which addresses this question is usually referred to as a {\it sphere theorem}, and the literature abounds with examples. 

An example of particular importance to us is the work of Margerin \cite {Ma}, in which he formulated a notion of ``weak curvature pinching."
To explain this we will need to establish some notation.  Given a Riemannian four-manifold $(M^4,g)$, let $Riem$ denote the curvature
tensor, $W$ the Weyl curvature tensor, $Ric$ the Ricci tensor, and $R$ the scalar curvature.  The usual decomposition of $Riem$ under
the action of $O(4)$ can be written   
\begin{align}
Riem = W + \frac{1}{2} E \odot g + {1\over 24} R g \odot g,\notag
\end{align}
where $E = Ric - {1 \over 4} Rg$ is the trace-free Ricci tensor and $\odot$ denotes the Kulkarni-Nomizu product.  If we let 
$Z = W + {1 \over 2} E \odot g$, then 
$$
Riem = Z + {1 \over 24} R g \odot g.
$$
Note that $(M^4,g)$ has constant curvature if, and only if, $Z \equiv 0$.  
We now define the scale-invariant ``weak pinching" quantity
\begin{align}
WP \equiv {|Z|^2 \over R^2} = {|W|^2 + 2|E|^2 \over R^2} ,
\end{align}
where $|Z|^2 = Z_{ijkl}Z^{ijkl}$ denotes the norm of $Z$ viewed as a $(0,4)$--tensor.   

Margerin's main result states that if $R > 0$ and $WP < {1 \over 6}$, then $M^4$ is diffeomorphic to either $S^4$ or $\Bbb R P^4$.
Moreover, this ``weak pinching" condition is {\it sharp}:  The symmetric spaces $(\Bbb C P^2, g_{FS})$ and $(S^3 \times S^1,
g_{prod.})$ both have $R > 0$ and $WP \equiv {1 \over 6}$.    

Margerin's proof relied on an important tool in the subject of sphere theorems, namely, Hamilton's Ricci flow.  In fact, previous
to his work, Huisken \cite {Hu} had used the Ricci flow to prove a similar pinching result, but with a slightly weaker constant.
In addition, Hamilton \cite {Ha} had used his flow to study four-manifolds with positive curvature operator.  As Margerin
points out in his introduction, there is no relation between weak pinching and positivity of the curvature operator; indeed,
weak pinching even allows for some negative sectional curvature.

One drawback to the sphere theorems described above is that they require one
to verify a {\it pointwise} condition on the curvature.  In contrast, consider
the (admittedly much simpler) case of surfaces.  For example, if the Gauss
curvature of the surface $(M^2,g)$ satisfies $\int K dA > 0$, then $M^2$ is
diffeomorphic to $S^2$ or $\Bbb R P^2$.  In addition to this topological
classification, the uniformization theorem implies that $(M^2,g)$ is conformal
to a surface of constant curvature, which is then covered isometrically by
$S^2$.  Therefore, in two dimensions one has a``sphere theorem" which only
requires one to check an {\it integral} condition on the curvature. 

In an earlier work of Chang--Gursky--Yang, they have reformulated the result of Margerin by
showing that  
the smooth four-sphere is also characterized by an 
integral curvature condition.  As we shall 
see, the condition has the additional properties of being {\it sharp} and
{\it conformally invariant}.\\
  
\begin{ThmA}[\cite{CGY03}]
Let $(M^4,g)$ be a smooth, closed manifold for which 
\begin{enumerate}[(i)]
\item the Yamabe invariant $Y(M^4,g) > 0$, and
\smallskip
\item the Weyl curvature satisfies
\begin{align}
\int_{M^4} |W|^2 dv < 16 \pi^2 \chi(M^4).\label{Acond}
\end{align}

\end{enumerate}
\smallskip\noindent
Then $M^4$ is diffeomorphic to either $S^4$ or $\Bbb R P^4$.\\
\end{ThmA}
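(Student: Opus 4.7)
The plan is to convert the integral hypothesis into pointwise curvature pinching and then invoke Margerin's theorem. The starting point is the Gauss--Bonnet--Chern identity in dimension four,
\begin{equation*}
8\pi^2\chi(M^4) \;=\; \int_{M^4}\Bigl(\tfrac{1}{4}|W|^2 + 4\sigma_2(A)\Bigr)\,dv,
\end{equation*}
where $A=\tfrac{1}{2}\bigl(Ric-\tfrac{R}{6}g\bigr)$ is the Schouten tensor and $\sigma_2(A)=\tfrac{1}{2}\bigl((\mathrm{tr}\,A)^2-|A|^2\bigr)=\tfrac{R^2}{96}-\tfrac{|E|^2}{8}$. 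Since $|W|^2\,dv$ is pointwise conformally invariant, this identity forces $\int\sigma_2(A)\,dv$ to be a conformal invariant as well. Substituting the hypothesis $\int|W|^2\,dv<16\pi^2\chi(M^4)$ gives the key integral inequality
\begin{equation*}
\int_{M^4}\sigma_2(A)\,dv \;>\; \pi^2\chi(M^4)\;>\;0.
\end{equation*}

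The main step is to promote this integral bound, together with $Y(M^4,g)>0$, to pointwise positivity through a conformal change. Concretely, I would aim to produce $\tilde g\in[g]$ whose Schouten tensor lies pointwise in the positive cone $\Gamma_2^+=\{\sigma_1>0,\ \sigma_2>0\}$. A Yamabe representative $g_0\in[g]$ already satisfies $\sigma_1(A_{g_0})>0$, so from it I would run a continuity method for the fully nonlinear conformal equation
\begin{equation*}
\sigma_2(A_{\tilde g}) \;=\; c\,e^{-4w},\qquad \tilde g=e^{2w}g_0,
\end{equation*}
deforming the positive constant $c$ upward from $0$ within the admissible class in which $A_{\tilde g}\in\Gamma_2^+$. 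The critical and most delicate step is the \emph{a priori} $C^2$ estimate on the conformal factor $w$, since this is a fully nonlinear elliptic equation of Monge--Amp\`ere type and admissibility must be preserved along the path; here the integral bound above on $\int\sigma_2$ enters through a Moser-type iteration together with a concentration-compactness analysis that rules out blow-up. Openness and closedness of the admissible set of $c$ then yield the desired $\tilde g$.

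Once $A_{\tilde g}\in\Gamma_2^+$ pointwise, an elementary eigenvalue computation gives $Ric_{\tilde g}>0$ and $|E_{\tilde g}|^2<R_{\tilde g}^2/12$. To reach Margerin's pinching $WP<\tfrac{1}{6}$ one still needs control of the Weyl part; but the conformal invariance of $\int|W|^2\,dv$ and the strict gap $16\pi^2\chi-\int|W|^2>0$ ensure that $|W|$ is small in $L^2$, so running Hamilton's Ricci flow from $\tilde g$ and tracking the evolution of $WP$ should produce a time $t>0$ at which $WP<\tfrac{1}{6}$ holds pointwise. Margerin's theorem then identifies $M^4$ with $S^4$ or $\mathbb{R}P^4$. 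The principal obstacle is unambiguously the fully nonlinear PDE step: without the $C^2$ estimates in $\Gamma_2^+$ neither the algebraic passage to $Ric>0$ nor the subsequent flow argument becomes available.
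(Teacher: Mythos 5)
Your framework (Chern--Gauss--Bonnet, conformal invariance of $\int|W|^2\,dv$, and a fully nonlinear $\sigma_2$--equation solved by a continuity method in the spirit of Chang--Gursky--Yang) is the right one, but the final step contains a genuine gap, located exactly where you depart from the intended argument. Solving $\sigma_2(A_{\tilde g})=c\,e^{-4w}>0$ only places $A_{\tilde g}$ in the cone $\Gamma_2^+$; in the paper's normalization this gives $2|E|^2<\tfrac16R^2$ and $Ric>0$ but says nothing pointwise about $W$, so you are left to bridge the remaining distance to Margerin's condition $WP=(|W|^2+2|E|^2)/R^2<\tfrac16$ by a Ricci flow argument. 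That bridge is not available under the stated hypotheses. First, the strict inequality $\int|W|^2\,dv<16\pi^2\chi(M^4)$ does \emph{not} make $\|W\|_{L^2}$ small: on $S^4$ the threshold is $32\pi^2$ and the gap may be arbitrarily small, so there is no smallness to exploit. Second, even genuine $L^2$--smallness of $W$ is not known to produce a time at which $WP<\tfrac16$ holds pointwise along the flow without already having pointwise pinching; this is precisely the open question discussed in Section \ref{monoricsec} of this paper, where convergence is obtained only under the far stronger hypothesis $\int|W|^2\,dv<\epsilon_0$ with $\epsilon_0$ on the order of $\pi^2/2000$, and even then for a Yamabe initial metric and by a direct monotonicity argument rather than by tracking $WP$.

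The repair is to solve a different fully nonlinear equation. The results of \cite{CGY1,CGY2} show that under hypotheses (i)--(ii) --- equivalently $\int\bigl(\sigma_2(A)-\tfrac14|W|^2\bigr)\,dv>0$, which is the conformally invariant quantity your Gauss--Bonnet computation isolates --- there is a conformal metric for which the \emph{full} Gauss--Bonnet integrand $\sigma_2(A)-\tfrac14|W|^2$ is pointwise positive. Rearranging that single pointwise inequality yields $|W|^2+2|E|^2<\tfrac16R^2$ together with $R>0$, i.e.\ $WP<\tfrac16$ everywhere, and Margerin's theorem applies immediately with no auxiliary flow. In short, the Weyl term must be built into the right-hand side of the fully nonlinear equation (whose solvability is exactly what the hypothesis $\int|W|^2\,dv<16\pi^2\chi$ guarantees), not handled afterwards by Ricci flow.
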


\begin{Rem}
\ 
\begin{enumerate}[1.]
\item  Recall that the Yamabe invariant is defined by
\begin{align}
Y(M^4,g) = \inf_{\tilde g \in [g]} vol(\tilde g)^{-{1\over 2}}\int_{M^4} R_{\tilde g} dv_{\tilde g},
\end{align}
where $[g]$ denotes the conformal class of $g$.  Positivity of the Yamabe invariant implies that
$g$ is conformal to a metric of strictly positive scalar curvature \cite {KW}.  

\item  In the statement of Theorem A, the norm of the Weyl tensor is given by $|W|^2 = W_{ijkl}W^{ijkl}$;
i.e., the usual definition when $W$ is viewed as a section of $\otimes^4 T^{*}M^4$.
\end{enumerate}
\end{Rem}

By appealing to the Chern--Gauss--Bonnet formula, it is possible to replace \eqref{Acond} with a 
condition which does not involve the Euler characteristic.  Since
\begin{align}
8 \pi^2 \chi(M^4) = \int_{M^4} ( {1\over 4}|W|^2 - {1\over 2} |E|^2 + {1\over 24}R^2 )dv,   
\end{align}
one can also prove:\\

\begin{ThmA'}[\cite{CGY03}]
Let $(M^4,g)$ be a smooth, closed manifold for which 
\begin{enumerate}[(i)] 
\item the Yamabe invariant $Y(M^4,g) > 0$, and
\smallskip
\item the curvature satisfies
\begin{align}\label{GBintegrand}
\int_{M^4} ( -{1\over2} |E|^2 + {1\over 24}R^2 - {1\over4} |W|^2 ) dv > 0.
\end{align}
\end{enumerate}
\smallskip\noindent
Then $M^4$ is diffeomorphic to either $S^4$ or $\Bbb R P^4$. \\
\end{ThmA'}

Formulating the result of Theorem A in this manner allows us to explain the connection 
with the work of Margerin.  This connection relies on the works \cite {CGY1,CGY2}
in which they established the existence of solutions to 
a certain fully nonlinear equation in conformal geometry.  
Simply put, the results of \cite {CGY1} and \cite {CGY2} allow one to prove that under the hypotheses
of Theorem A', there is a conformal metric for which the integrand in (\ref{GBintegrand}) is
{\it pointwise} positive.  That is, through a conformal deformation of metric,
one is able to pass from positivity in an average sense to pointwise positivity.
Now, any metric for which the integrand in (\ref{GBintegrand}) is positive must satisfy
$$
|W|^2 + 2|E|^2 < {1\over 6}R^2,
$$
by just rearranging terms.
Note that this implies in particular that $R > 0$.  Dividing by $R^2$, we conclude that
$$
WP = {|W|^2 + 2|E|^2 \over R^2} < {1\over6}.
$$
The conclusion of the theorem thus follows from Margerin's work. \\

We remark that in \cite{CGY03}, they have also established that Theorem A is {\it sharp}. That is, one can
precisely characterize the case of equality 
as stated in Theorem B of that paper.

\begin{ThmB}[{\cite{CGY03}}]
Let $(M^4,g)$ be a smooth, closed manifold which is not 
diffeomorphic to either $S^4$ or $\mathbb{R}P^4$.  Assume in addition that 
\begin{enumerate}[(i)] 
\item the Yamabe invariant $Y(M^4,g) > 0$,
\smallskip
\item the Weyl curvature satisfies
\begin{align}
\int_{M^4} |W|^2 dv = 16 \pi^2 \chi(M^4).
\end{align}
\end{enumerate}
\smallskip\noindent
Then one of the following must be true:
\begin{enumerate}[1.]
\item $(M^4,g)$ is conformal to $\mathbb{C}P^2$ with the 
Fubini-Study metric $g_{FS}$, or
\smallskip
\item $(M^4,g)$ is conformal to a manifold which 
is isometrically covered by $S^3 \times S^1$
endowed with the product metric $g_{prod.}$.
\end{enumerate}
\end{ThmB}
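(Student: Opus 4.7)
The plan is to push the argument for Theorem A$'$ into the equality case and then invoke a pointwise rigidity theorem to identify the extremal metrics. Observe first that in dimension four $\int |W|^2\,dv$ is conformally invariant, so by Chern--Gauss--Bonnet the hypothesis $\int |W|^2\,dv = 16\pi^2 \chi$ is equivalent to
\begin{align*}
\int_{M^4}\left( -\tfrac12|E|^2 + \tfrac{1}{24}R^2 - \tfrac14|W|^2 \right) dv = 0,
\end{align*}
and this equality persists in every conformal representative of $[g]$.

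The first step is to produce, via the fully nonlinear conformal PDE machinery of \cite{CGY1, CGY2} already used in the proof of Theorem A$'$, a conformal representative $\tilde g \in [g]$ for which the integrand above is pointwise non-negative. In Theorem A$'$ strict positivity of the integral yields pointwise strict positivity; here one must extend the existence theory to the borderline. A natural approach is an approximation argument: perturb $g$ outside its conformal class (keeping the Yamabe invariant positive) so that the integral is strictly positive, apply \cite{CGY1,CGY2} to obtain pointwise positivity in each approximant, and pass to a limit, ruling out loss of compactness using the positive Yamabe hypothesis. The vanishing of the integral of the limiting non-negative integrand then forces
\begin{align*}
|W_{\tilde g}|^2 + 2|E_{\tilde g}|^2 \equiv \tfrac{1}{6} R_{\tilde g}^2
\end{align*}
on all of $M^4$, which is precisely the borderline of Margerin's weak pinching.

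The second step is rigidity: classify closed positive scalar curvature four-manifolds with $WP \equiv 1/6$. The two symmetric spaces $(\mathbb{CP}^2, g_{FS})$ and $(S^3 \times S^1, g_{prod.})$ realize this pinching. To show these are the only possibilities under the assumed non-spherical topology, one traces the equality case in the tensor and reaction-term inequalities driving the Ricci flow argument of \cite{Ma}: equality should force the curvature operator to remain at an extremal algebraic type under the flow, and via parabolic rigidity one expects $\nabla W \equiv 0$ and $\nabla E \equiv 0$, so that $(M^4, \tilde g)$ is locally symmetric. The de Rham decomposition together with the classification of compact locally symmetric four-spaces with $R > 0$ then leaves precisely $\mathbb{CP}^2$ with $g_{FS}$ and the isometric covers by $S^3 \times S^1$ with $g_{prod.}$. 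The main obstacle will be this rigidity step, which requires precise bookkeeping of equality in both the Kato-type estimates and the algebraic pinching in \cite{Ma}; the borderline existence for the conformal PDE in step one is a secondary technical difficulty but should follow from a compactness argument using the positive Yamabe invariant.
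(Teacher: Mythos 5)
There is a genuine gap, and it sits at the very first step of your plan. You propose to approximate $g$ by metrics outside its conformal class, with $Y>0$, for which $\int(-\tfrac12|E|^2+\tfrac1{24}R^2-\tfrac14|W|^2)\,dv>0$. But by Theorem A$'$ any such metric on the same smooth manifold $M^4$ would force $M^4$ to be diffeomorphic to $S^4$ or $\mathbb{R}P^4$, which is excluded by hypothesis; so the approximants you want simply do not exist. This impossibility is not a technical nuisance to be engineered around --- it is the engine of the actual proof in \cite{CGY03}: since every metric near $g$ must satisfy $\int|W|^2\,dv\ge 16\pi^2\chi(M^4)$, the metric $g$ is a local minimum of the conformally invariant Weyl functional and hence Bach-flat. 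Bach-flatness is what makes the two integral Weitzenb\"ock identities of Propositions \ref{prop1} and \ref{prop2} available, and these, combined with the degenerate conformal equation $\sigma_2(A)=\tfrac14|W|^2$ solved \emph{within} $[g]$ (the $\lambda=0$ case of Proposition \ref{chosenpde}, where the limit is a priori only $C^{1,1}$) and a delicate pointwise algebraic analysis of the integrand $G$, yield Proposition \ref{prop4.1}: the extremal conformal metric is smooth, Einstein, and half conformally flat. Your proposal never extracts or uses Bach-flatness, and without it there is no Weitzenb\"ock machinery to run.

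The second step of your plan is also substantially harder than you indicate. Granting a conformal metric with $WP\equiv\tfrac16$ and $R>0$ (which is indeed equivalent to the degenerate equation $\sigma_2(A)=\tfrac14|W|^2$), the classification of such metrics is a sharp rigidity theorem of Margerin proved in separate work by a careful analysis of the equality cases in his algebraic curvature estimates; it does not drop out of ``tracing equality'' in the Ricci flow argument of \cite{Ma}, because at the borderline the flow no longer strictly improves the pinching and the parabolic strong maximum principle gives no foothold without additional structure. The route of \cite{CGY03} deliberately avoids invoking that rigidity: the identification of $(\mathbb{C}P^2,g_{FS})$ and of the quotients of $S^3\times S^1$ is carried out from the Einstein and half-conformally-flat conclusions of Proposition \ref{prop4.1}, together with the signature formula and the classification of half conformally flat Einstein four-manifolds, rather than from the borderline of Margerin's pointwise pinching. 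As written, both halves of your argument therefore rest on steps that either cannot be carried out (the approximation) or presuppose a separate deep theorem (the $WP\equiv\tfrac16$ rigidity) without supplying it.
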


The purpose of this note is twofold. First we will apply the method of proof of Theorem B in \cite{CGY03} to establish a conformal gap theorem, which we will state and establish as Theorem \ref{conformalgap} in Section \ref{confgapsec}. The second is to address a question of ``monotonicity" related to the proof of Theorem A along the Ricci flow, which we will formulate and discuss in detail in Section \ref{monoricsec}. In particular, we study the decay of the curvature quantities $|W|$, $|E|$, and $|R-\ol{R}|$, or equivalently the decay of the reduced curvature tensor $\left|Riem-\frac{1}{24}\ol{R} g\odot g\right|$, in the terminology of the abstract. We also remark that the monotonicity result of Section \ref{monoricsec} has been applied in some 
recent work of Chang--Prywes--Yang \cite{CPY21}to establish
the bi-Lipschitz equivalence of a class of metrics to the canonical metric on $S^4$  under some suitable curvature conditions.
 

\section{A conformal gap theorem}\label{confgapsec}

The proof of Theorem B in \cite{CGY03} relies on a vanishing result, 
in a sense which we will now explain.
Suppose $(M^4,g)$ satisfies the hypotheses of Theorem B. 
If there is another metric in a small neighborhood of 
$g$ for which the $L^2$-norm of the Weyl tensor is smaller; i.e., 
$$
\int_{M^4} |W|^2 dv < 16 \pi^2 \chi(M^4),
$$
then by Theorem A we would conclude that $M^4$ is diffeomorphic 
to either $S^4$ or ${\bold RP}^4$.
This, however, contradicts 
one of the assumptions of Theorem B.  Therefore, for every metric in
some neighborhood of $g$, 
$$
\int_{M^4} |W|^2 dv \geq 16 \pi^2 \chi(M^4).
$$
Consequently, $g$ is a critical point (actually, a local minimum) 
of the {\it Weyl functional} $g \mapsto \int |W|^2 dvol$. 
The gradient of this functional is called the {\it Bach
tensor}, and we will say that critical metrics are 
{\it Bach--flat}. 

We recall that in local coordinates, the Bach tensor is given by
\begin{align}
B_{ij} \, = \, 
\nabla^k 
\nabla^\ell 
W_{kij\ell} \, + \,
\frac{1}{2} \, 
R^{k \ell} \, W_{k i j \ell}.\notag
\end{align}

Thus, Bach-flatness $B = 0$ implies that the Weyl tensor lies in the kernel of a second order differential operator. The proof of Theorem B was established via some involved Lagrange-multiplier arguments in \cite{CGY03}.

One important fact we rely on in this note
is that both the Weyl tensor and the Bach tensor are
pointwise conformal invariants on 4-manifolds
in the sense that on $(M^4, g)$, if we change
the metric $g$ to $\hat g = e^{2w} g $ in the conformal class $[g]$ of $g$, then $W_{\hat g} = e^{-2w} W_g$, while $B_{\hat g} = e^{-2w} B_{g}$. Note that since $W$ is a 4-tensor and $B$ is a 2-tensor, this translates to fact that the $L^2$ norm of the 
Weyl tensor and the $L^1$ norm of the Bach tensor are conformal invariants in the integral sense, i.e. for all $g \in [g]$,
\begin{align} 
\int_{M^4} |W_{\hat g}|_{\hat g}^{2} dv_{\hat g} = \int_{M^4} |W_g|_g^2 dv_g\notag
\end{align}
and
\begin{align}
\int_{M^4} |B_{\hat g}|_{\hat g} dv_{\hat g} = \int_{M^4} |B_g|_g dv_g,\notag
\end{align}
where $|W_g|_g $ denotes the g-norm of the Weyl tensor in $g$ metric; which, to simplify the notations we will henceforth denote by $|W|_g$ or simply $|W|$.
 
We now state a conformal gap theorem which is the main result of this section. 

\begin{Thm} \label{conformalgap}
On $(S^4, g)$, if $Y(M^4, g)> 0$ and $g$ is Bach-flat with 
\begin{align} \label{gap}
\int_{S^4} |W|_g^2 dv_g \leq 32 \pi^2,
\end{align}
then $W_g \equiv 0$; hence $g \in [g_c]$.
\end{Thm}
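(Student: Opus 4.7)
The plan is to pass to a conformally-related metric $\hat g\in[g]$ enjoying pointwise curvature pinching, and then exploit the conformal invariance of Bach-flatness to force $W\equiv 0$. First, I rewrite the hypothesis using Chern--Gauss--Bonnet: since $\chi(S^4)=2$, the identity recalled in the excerpt gives
\[
16\pi^2 \;=\; \int_{S^4}\!\left(\tfrac{1}{4}|W|^2-\tfrac{1}{2}|E|^2+\tfrac{1}{24}R^2\right)dv_g,
\]
so $\int_{S^4}|W|^2\le 32\pi^2$ is equivalent to
\[
\int_{S^4}\!\left(-\tfrac{1}{2}|E|^2+\tfrac{1}{24}R^2-\tfrac{1}{4}|W|^2\right)dv_g \;\ge\; 0,
\]
which is exactly hypothesis (ii) of Theorem A$'$ with $\ge$ in place of $>$. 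In the strict case $\int|W|^2<32\pi^2$, the fully nonlinear PDE results \cite{CGY1,CGY2} produce a representative $\hat g=e^{2w}g\in[g]$ on which the integrand is \emph{pointwise} positive, yielding $R_{\hat g}>0$ and the pointwise pinching $|W_{\hat g}|^2+2|E_{\hat g}|^2<\tfrac{1}{6}R_{\hat g}^2$. Because $B_{\hat g}=e^{-2w}B_g$, Bach-flatness of $g$ is inherited by $\hat g$.

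On this Bach-flat, pointwise-pinched metric I would next run a Bochner-type integration-by-parts argument for the Weyl tensor. Pairing the Bach equation $\nabla^k\nabla^\ell W_{kij\ell}=-\tfrac{1}{2}R^{k\ell}W_{kij\ell}$ with $W$ and using the second Bianchi identity produces an integral identity of the form
\[
\int_{S^4}|\nabla W|^2\,dv_{\hat g} \;=\; \int_{S^4} Q(W,E,R)\,dv_{\hat g},
\]
where $Q$ is a universal cubic curvature form. The pointwise pinching $|W|^2+2|E|^2<R^2/6$ together with $R_{\hat g}>0$ and a Yamabe--Sobolev inequality (available since $Y(\hat g)=Y(g)>0$) should let the right-hand side be absorbed into the left, forcing $W_{\hat g}\equiv 0$. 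Because $|W_{\hat g}|_{\hat g}=e^{-2w}|W_g|_g$ pointwise, $W_g\equiv 0$ as well, and hence $g\in[g_c]$.

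The equality case $\int|W|^2=32\pi^2$ is more delicate, since \cite{CGY1,CGY2} no longer produces a strictly pointwise-positive representative. Here I would adapt the Lagrange-multiplier strategy from the proof of Theorem B in \cite{CGY03}: Bach-flatness plays the role of the free critical-point condition while the integral equality is the constraint, and the rigidity analysis should characterize the equality case on $S^4$ as $W\equiv 0$---the analogue, for $S^4$, of the conformal-$\mathbb{CP}^2$ or covered-by-$S^3\times S^1$ conclusions of Theorem B on other 4-manifolds. The main obstacles I anticipate are (i) identifying the sharp constants in the Bochner calculation so that the cubic term $Q$ is genuinely absorbed by the pinching rather than merely controlled qualitatively, and (ii) carrying through the Lagrange-multiplier bootstrap in the $S^4$ setting, where the expected conclusion is $W\equiv 0$ rather than rigidity to a different manifold.
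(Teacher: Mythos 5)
Your overall skeleton---rewrite the hypothesis via Chern--Gauss--Bonnet, pass to a good conformal representative, use the conformal invariance of Bach-flatness, and treat equality by the Theorem~B rigidity machinery---matches the paper. But the core of your strict-case argument has a genuine gap. Pointwise pinching $|W|^2+2|E|^2<\tfrac16 R^2$ does \emph{not} come close to forcing $W\equiv 0$, even together with Bach-flatness used ``softly'': generic metrics on $S^4$ satisfying Margerin's pinching have $W\neq 0$, and the constant $\tfrac16$ is a diffeomorphism-pinching constant, not a vanishing constant, so a Bochner identity whose cubic term is merely ``absorbed'' by this pinching and a Yamabe--Sobolev inequality cannot work. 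What the paper actually does is quite different in its details: it chooses $\hat g\in[g]$ solving the exact equation $\sigma_2(A)-\tfrac14|W|^2=\lambda$ with $\lambda\geq 0$ constant (not just a representative with pointwise positive integrand), because only for this normalization does one get the sign condition $\tfrac32|\nabla W|^2+3\bigl(|\nabla E|^2-\tfrac1{12}|\nabla R|^2\bigr)\geq 0$ needed to discard the gradient terms---note that $|\nabla E|^2-\tfrac1{12}|\nabla R|^2$ has no sign in general. It then combines \emph{two} integral identities valid for Bach-flat metrics (the identity of Proposition~\ref{prop1} and the Weitzenb\"ock formula of Proposition~\ref{prop2}) to conclude $0\geq\int G\,dv$ for an explicit cubic curvature form $G$, and the decisive input is the sharp pointwise algebraic estimate (Proposition~4.4 of \cite{CGY03}) that $\tilde G\geq 0$ on $\{|W|>0\}$, where $R$ is replaced by $\tilde R=\bigl(12|E|^2+24\|W^+\|^2+24\|W^-\|^2\bigr)^{1/2}$ using the $\sigma_2$ equation itself. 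No Sobolev inequality enters; the vanishing of $W$ comes from $G>\tilde G\geq 0$ on $\{|W|>0\}$ when $\lambda>0$, forcing that set to have measure zero. Your proposal defers exactly this algebraic content to ``identifying the sharp constants,'' which is the whole difficulty.

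In the equality case your instinct to invoke the Theorem~B analysis is right, but you do not say why the $\mathbb{C}P^2$ and $S^3\times S^1$ alternatives are excluded and $W\equiv 0$ results. The mechanism is that Proposition~4.1 of \cite{CGY03} yields that $\hat g$ is Einstein with either $W^+\equiv 0$ or $W^-\equiv 0$, and on $S^4$ the Hirzebruch signature formula gives
\begin{align}
0=12\pi^2\tau(S^4)=\int_{S^4}|W^+|^2\,dv-\int_{S^4}|W^-|^2\,dv,\notag
\end{align}
so the vanishing of one self-dual half forces the vanishing of both. Without this topological step the equality case is not closed.
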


We remark that an important class of metrics
which are Bach-flat is the class of metrics which are conformal to some Einstein metric. One way to see this is through a well-known relationship between the Weyl tensor and the Ricci tensor (\cite{De}, also \cite{CGY03}, p.124). 
which can be derived as a consequence of the Bianchi identity:
\begin{align}
\nabla_k W_{ijlk} : = (\delta W) _{ijl} = \frac{1}{2}(dA)_{ijl},\notag
\end{align}
where
\begin{align} 
(dA)_{ijl} = \nabla_iA_{jl} \,- \, \nabla_j A_{il},\notag
\end{align}
and where
$A_{ij} = Ric_{ij} - \frac{1}{6} R g_{ij}$
denotes the Schouten tensor. Thus in particular 
an Einstein metric is Bach-flat, and as the later condition is pointwisely conformally invariant, so is a metric conformal to an Einstein metric. 
Therefore as a corollary to Theorem \ref{conformalgap}, we have

\begin{Cor}
The only Einstein metric $g$ on $S^4$ with
positive scalar curvature and
\begin{align}
\int_{S^4} |W|_g^2 dv_g \leq 32 \pi^2
\end{align}
is conformally equivalent to the canonical metric $g_c$ on $S^4$.
\end{Cor}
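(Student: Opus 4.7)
The plan is to derive the corollary directly from Theorem \ref{conformalgap} by checking that the Einstein hypothesis, together with positive scalar curvature, supplies the two structural conditions (positive Yamabe invariant and Bach-flatness) required by the theorem.

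First, I would handle the Yamabe sign. Since $g$ is Einstein, $\mathrm{Ric}_g = \tfrac{R_g}{4}g$ with $R_g$ constant; the assumption $R_g > 0$ thus exhibits $g$ itself as a metric of strictly positive scalar curvature in its own conformal class. By the Kazdan--Warner criterion recalled in the remark to Theorem A, this suffices to conclude $Y(S^4,g) > 0$.

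Next, I would verify that $g$ is Bach-flat, as already announced in the discussion preceding the corollary. The Schouten tensor of an Einstein metric satisfies $A_{ij} = \mathrm{Ric}_{ij} - \tfrac{1}{6}R_g\,g_{ij} = \tfrac{R_g}{12}g_{ij}$, which is parallel because $R_g$ is constant; hence $(dA)_{ijl} \equiv 0$ and the Bianchi identity $(\delta W)_{ijl} = \tfrac{1}{2}(dA)_{ijl}$ quoted in the excerpt gives $\delta W \equiv 0$. Inspecting $B_{ij} = \nabla^k \nabla^\ell W_{kij\ell} + \tfrac{1}{2} R^{k\ell} W_{kij\ell}$, the first term is a further divergence of $\delta W$ up to commutator terms that pair $W$ with $\mathrm{Ric}$, and these contractions vanish for Einstein $g$ by the tracelessness of $W$; the second term equals $\tfrac{R_g}{8} g^{k\ell} W_{kij\ell} = 0$ for the same reason. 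Thus $B \equiv 0$.

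With $Y(S^4,g) > 0$, $B_g \equiv 0$, and the assumed bound $\int_{S^4} |W|_g^2\, dv_g \le 32\pi^2$, Theorem \ref{conformalgap} applies and yields $W_g \equiv 0$, so $g \in [g_c]$ as claimed. I do not expect a genuine obstacle: the main content has been absorbed into Theorem \ref{conformalgap}, and the corollary amounts to translating the Einstein condition into the hypotheses of that theorem. The only calculational point worth care is confirming that the iterated-divergence term in $B$ collapses for Einstein metrics, but this is standard and the reduction $\delta W = 0 \Rightarrow B = 0$ on Einstein backgrounds is well documented.
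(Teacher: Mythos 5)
Your proposal is correct and follows essentially the same route as the paper: the text preceding the corollary derives Bach-flatness of Einstein metrics from the Bianchi identity $(\delta W)_{ijl}=\tfrac12(dA)_{ijl}$ (since $A$ is parallel for Einstein $g$), notes positivity of the Yamabe invariant from $R_g>0$, and then invokes Theorem \ref{conformalgap}. Your additional verification that the iterated-divergence and $R^{k\ell}W_{kij\ell}$ terms of the Bach tensor vanish is just a fleshed-out version of the same argument.
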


\begin{Rem}
 
We would like to point out it remains an open question whether on the 4-sphere there exists an Einstein metric with positive scalar curvature other than the canonical metric $g_c$. 
It turns out the ``gap" provided in the corollary above in terms of the size of Weyl curvature for Einstein metric is not the best one known. In an earlier work of Gursky \cite{Gu00}, he has studied the class of metrics whose self-dual part of Weyl tensor is harmonic (i.e. $\delta W^+ = 0$, where $\delta$ denotes the divergence), which again includes the Einstein metrics as special cases, and the conclusion in Theorem C in that paper presents a gap bigger than ours. More precisely, Gursky's result states that the only Einstein metric $g$ on $S^4$ with
positive scalar curvature and
$$\int_{S^4} |W|_g^2 dv_g \leq \frac{128}{3} \pi^2   $$
is conformally equivalent to the canonical metric $g_c$ on $S^4$.

 \end{Rem}

We now present a proof of Theorem 2.1 which largely follows from the proof of Theorem B in
\cite{CGY03}. We start by stating two integral 
equalities for Bach-flat metrics on 4-manifolds. 

\begin{Prop}[See {\cite[Lemma 5.4]{CGY1}}] \label{prop1}
If $(M^4 , g)$ is Bach-flat, then
\begin{align} \label{identity1}
0 = \int\limits_{M^4} \left\{ 3 \, \left( |\nabla E|^2 - {1\over 12}
    |\nabla R|^2 \right) + 6tr E^3 + R|E|^2 -
      6W_{ijk\ell} E_{ik} E_{j \ell}\right\} dv \,,
\end{align}
where $tr E^3 = E_{ij} E_{ik} E_{jk}$.
\end{Prop}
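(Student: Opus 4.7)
The plan is to derive this identity by contracting the vanishing Bach tensor against the trace-free Ricci tensor and integrating by parts. Since $B=0$, for any test tensor $T^{ij}$ we have $\int B_{ij} T^{ij}\,dv = 0$; the integrand on the right of \eqref{identity1} suggests the natural choice $T = E$, because the $|\nabla E|^2$ and $\mathrm{tr}\,E^3$ terms are precisely what comes out when derivatives and curvatures land on $E$.

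First, I would rewrite the Bach tensor in a form where only one derivative falls on $W$. Starting from
\[
B_{ij} = \nabla^k \nabla^\ell W_{kij\ell} + \tfrac{1}{2} R^{k\ell} W_{kij\ell}
\]
and substituting the Bianchi identity $\nabla^\ell W_{kij\ell} = \tfrac{1}{2}(\nabla_k A_{ij} - \nabla_i A_{kj})$ recalled just above the statement, one gets
\[
B_{ij} = \tfrac{1}{2}\Delta A_{ij} - \tfrac{1}{2}\nabla^k \nabla_i A_{kj} + \tfrac{1}{2}R^{k\ell} W_{kij\ell}.
\]
Now commute $\nabla^k \nabla_i$ on the middle term, picking up curvature terms of the form $R_i{}^m A_{mj}$ and $R^k{}_{ij}{}^m A_{km}$, and use the contracted second Bianchi identity to reduce $\nabla^k A_{kj}$ to a multiple of $\nabla_j R$.

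Next, I would substitute $A = E + \tfrac{R}{12}g$ throughout (since $A_{ij} = \mathrm{Ric}_{ij} - \tfrac16 R g_{ij}$ and $E_{ij} = \mathrm{Ric}_{ij} - \tfrac14 Rg_{ij}$), contract the resulting expression with $E^{ij}$, and integrate over $M^4$. The Laplacian term produces $-\int |\nabla E|^2$ after integration by parts, while the terms in $\nabla R$ combine with the $\nabla^k\nabla_i A_{kj}$ contribution to yield the $+\tfrac{1}{12}|\nabla R|^2$ factor. The algebraic curvature contractions require expanding $R_{kij\ell} = W_{kij\ell} + \tfrac12(E\odot g)_{kij\ell} + \tfrac{1}{24}R (g\odot g)_{kij\ell}$: the Weyl piece contracted twice with $E$ gives the $-6W_{ijk\ell}E_{ik}E_{j\ell}$ term, the $E\odot g$ piece gives the $6\,\mathrm{tr}\,E^3$ term using $E$'s tracelessness, and the scalar piece yields $R|E|^2$.

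The main obstacle will be the bookkeeping of constants: several commutator and integration-by-parts contributions must cancel or combine to produce exactly the coefficients $3, -\tfrac14, 6, 1, -6$ shown in \eqref{identity1}. In particular, verifying that the cross terms between $E$ and $\tfrac{R}{12}g$ in the substitution $A = E + \tfrac{R}{12}g$ collapse to the single $\tfrac{1}{12}|\nabla R|^2$ term (and not spurious $\nabla E \cdot \nabla R$ or $R\cdot E \cdot \mathrm{Ric}$ terms) is the delicate point. Once the algebra is organized so that $E$'s tracelessness is used systematically to kill $g$-traced contractions, the identity emerges as the statement that $2\int B_{ij} E^{ij}\,dv$ equals (up to an overall factor) the integrand in \eqref{identity1}, and Bach-flatness forces this integral to vanish.
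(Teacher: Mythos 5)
The paper itself offers no proof of this proposition---it is imported verbatim from \cite[Lemma 5.4]{CGY1}---and your strategy of pairing the vanishing Bach tensor with $E^{ij}$, rewriting $\nabla^k\nabla^\ell W_{kij\ell}$ via the Bianchi/Cotton identity, commuting derivatives with the contracted identity $\nabla^k A_{kj}=\tfrac13\nabla_j R$, and decomposing the Riemann tensor is precisely the derivation used in that reference; the coefficients do come out as claimed (e.g.\ $\int\langle\Delta A,E\rangle\,dv=-\int|\nabla E|^2\,dv$ by tracelessness of $E$, and the $\nabla_i\nabla_j(\operatorname{tr}A)$ term paired with $\nabla_iE^{ij}=\tfrac14\nabla^jR$ produces the $-\tfrac1{12}|\nabla R|^2$), so that the identity is $-6\int B_{ij}E^{ij}\,dv=0$. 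The only caveat is that you have deferred the constant bookkeeping, which is the entire computational content of the lemma, but every step of the outline is correct and goes through.
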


The second identity \eqref{identity2} below is a Weitzenb\"ock formula for Bach-flat metrics, derived 
in \cite{CGY03} using the Singer-Thorpe \cite{ST} decomposition of the curvature tensor on 4-manifolds viewed as curvature operators acting on 2 forms; the derivation is also motivated by the decomposition of the curvature tensor used in the proof of the main result in the paper of Margerin \cite{Ma}; we remark that establishing the
identity (\ref{identity2}) is actually one of the (two) major steps in the proof of Theorem B in \cite{CGY03}.

\begin{Prop}[{\cite[Proposition 3.3]{CGY03}}]\label{prop2}
If $(M^4, g)$ is Bach-flat, then
\begin{align} \label{identity2}
0 \,= \, \int_{M^4} |\nabla W |^2 - 
   \left\{ 72 \det W^+ + 72 \det W^- -
  {1\over 2} R |W|^2 + 2 W _{ij\kappa\ell}E_{i\kappa} E_{j\ell} \right\} dv.
\end{align}
\end{Prop}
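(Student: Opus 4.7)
The plan is to derive \eqref{identity2} by combining the standard Weitzenb\"ock formula for the Weyl tensor (viewed as a section of $S^2\Lambda^2$) with the Bach-flat condition, and then simplifying the resulting curvature-cubic terms using the specifically four-dimensional Singer--Thorpe decomposition $\Lambda^2 = \Lambda^2_+ \oplus \Lambda^2_-$.

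First, I would integrate by parts on a closed manifold:
\begin{align}
\int_{M^4} |\nabla W|^2\,dv \,=\, -\int_{M^4} \langle W, \nabla^*\nabla W\rangle\,dv.\notag
\end{align}
Next, I would rewrite $\nabla^*\nabla W_{ijk\ell}$ using the second Bianchi identity $\nabla_m W_{ijk\ell} + \nabla_k W_{ij\ell m} + \nabla_\ell W_{ij m k}=0$, which reduces $\nabla^m\nabla_m W_{ijk\ell}$ to expressions of the form $\nabla^k\nabla^m W_{ij\ell m}$ (and symmetric analogues) plus commutators $[\nabla^k,\nabla^m]W$ that produce $\Rm * W$ curvature terms. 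The identity $\nabla^k W_{kij\ell} = \tfrac{1}{2}(dA)_{ij\ell}$ quoted in the excerpt, together with the definition of the Bach tensor, gives $\nabla^k\nabla^\ell W_{kij\ell} = -\tfrac12 R^{k\ell}W_{kij\ell}$ whenever $B=0$. Substituting this eliminates all ``bad'' second-derivative terms and leaves an integrand that is purely quadratic in $\Rm$ acting on $W$.

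The second step is algebraic: I would expand $\Rm = W + \tfrac12 E\odot g + \tfrac{1}{24}R\,g\odot g$ inside the curvature-times-$W$ terms produced above. The $R\,g\odot g$ piece contracts against $W$ to yield the term $\tfrac12 R|W|^2$, while the $E\odot g$ piece yields the mixed term $2W_{ijk\ell}E_{ik}E_{j\ell}$ (and terms that vanish by the trace-free and algebraic symmetries of $W$). The genuinely cubic term $W*W*W$ is the delicate one: here I would invoke the Singer--Thorpe decomposition, splitting $W=W^+ + W^-$ where $W^\pm \in S^2_0(\Lambda^2_\pm)$ are trace-free symmetric endomorphisms of the three-dimensional spaces $\Lambda^2_\pm$. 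Because $\Lambda^2_\pm$ is three-dimensional and $\operatorname{tr}W^\pm = 0$, Cayley--Hamilton gives $\operatorname{tr}((W^\pm)^3) = 3\det W^\pm$ (and off-diagonal cubic contractions between $W^+$ and $W^-$ vanish, as $W^+$ and $W^-$ act on orthogonal summands). A careful bookkeeping of the prefactors that arise when translating between the $(0,4)$-tensor picture used for $|\nabla W|^2$ and the operator-on-$\Lambda^2$ picture used to isolate $\det W^\pm$ produces precisely the coefficient $72$ in $72\det W^+ + 72\det W^-$.

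The main obstacle, and the step that occupies most of \cite[\S 3]{CGY03}, is the algebraic reduction in the second step: keeping track of the several index conventions (the $(0,4)$-tensor norm versus the operator norm on $\Lambda^2$, and the factor $2$ arising in $W^\pm$ when identifying it with half of the projection of $W$ onto $\Lambda^2_\pm\otimes\Lambda^2_\pm$), and verifying that the Singer--Thorpe block decomposition annihilates all would-be mixed cubic terms, so that only $\det W^+$, $\det W^-$, $R|W|^2$, and $W_{ijk\ell}E_{ik}E_{j\ell}$ survive with the stated coefficients. The analytic input, by contrast, is limited to the single integration by parts together with the two input identities (second Bianchi and Bach-flatness) and is essentially forced on a closed four-manifold.
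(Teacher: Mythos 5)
The statement you are proving is not actually proved in this paper: it is imported verbatim from \cite[Proposition~3.3]{CGY03}, and the authors only remark that the identity is derived there ``using the Singer--Thorpe decomposition of the curvature tensor on $4$-manifolds viewed as curvature operators acting on $2$-forms.'' Measured against that description, your strategy is the same one: integrate $|\nabla W|^2$ by parts, use the second Bianchi identity to trade the rough Laplacian of $W$ for divergence terms plus $\Rm * W$ commutators, kill the second-order terms with $\nabla^k\nabla^\ell W_{kij\ell}=-\tfrac12 R^{k\ell}W_{kij\ell}$ (which is exactly Bach-flatness), and then reduce the cubic Weyl invariants via the block decomposition $W=W^+\oplus W^-$ on $\Lambda^2_+\oplus\Lambda^2_-$, where $\operatorname{tr}\bigl((W^\pm)^3\bigr)=3\det W^\pm$ by Cayley--Hamilton for trace-free $3\times 3$ endomorphisms. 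Those individual ingredients are all correct (modulo a sign-convention slip in writing $\int|\nabla W|^2=-\int\langle W,\nabla^*\nabla W\rangle$, which should carry a plus sign if $\nabla^*\nabla$ denotes the positive rough Laplacian).

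The caveat is that, for an identity of this kind, the entire mathematical content lives in the coefficient bookkeeping you defer: verifying that the two independent cubic contractions $W_{ijkl}W^{klmn}W_{mn}{}^{ij}$ and $W_{ikjl}W^{kmln}W_m{}^i{}_n{}^j$ produced by the commutators both collapse (in dimension four only) to the combination $72\det W^+ + 72\det W^-$ after converting between the $(0,4)$-tensor norm and the operator norm on $\Lambda^2$, and that the $\tfrac12 E\odot g$ and $\tfrac1{24}Rg\odot g$ pieces contribute exactly $-2W_{ij\kappa\ell}E_{i\kappa}E_{j\ell}$ and $+\tfrac12 R|W|^2$ with those signs. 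As written, your proposal asserts rather than performs this reduction, so it is an accurate roadmap to the proof in \cite{CGY03} rather than a self-contained proof; nothing in it would fail, but the decisive step remains to be carried out.
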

\vskip .2in

We remark that in the identity (\ref{identity2}) we have viewed the Weyl tensor as a
curvature operator acting on 2-forms, and
$W^+$ and $W^-$ respectively as the self-dual and anti-self-dual parts of the Weyl operator.
\\ 

For the rest of this section we continue with the proof of Theorem \ref{conformalgap}. We first introduce a notion of elementary symmetric polynomials
$\sigma_\kappa : \mathbb{R}^n \rightarrow \mathbb{R}^n$,
$$
\sigma_\kappa ( \lambda_1 , \ldots , \lambda_n) =
   \sum\limits_{i_1 < \cdots < i_k}
     \lambda_{i_1} \cdots \lambda_{i_\kappa} \;.
$$
For a section ${\it{ S}} $ of End$(TM^4)$ --- or, equivalently, a
section of $T^\star M^4 \otimes TM^4$ ---
the notation $\sigma_\kappa (\it {S})$ means $\sigma_\kappa$ applied to the 
eigenvalues of $\it {S}$. In particular, given a section of 
the bundle of symmetric two--tensors such as $A$, 
by ``raising an index'' we can canonically associate a section
$g^{-1}A$ of  End$(TM^4)$.  At each point of $M^4$,
$g^{-1}A$ has 4 real eigenvalues; thus $\sigma_\kappa(g^{-1} A)$
is a smooth function on $M^4$.  To simplify notation, we
denote $\sigma_\kappa (A) = \sigma_\kappa (g^{-1}A)$.

Using this notion, we can rewrite
the Chern--Gauss--Bonnet formula as
\begin{align} \label{CGB2}
8 \pi ^2 \chi (M^4) =
   \int {1\over 4 }|W|^2 dvol +
     \int \sigma_2 (A)  dv \;.
\end{align}
where
\begin{align} \label{s2a}
\sigma_2 (A) = \frac{1}{24} R^2 - \frac{1}{2} |E|^2.
\end{align}

Returning to the consider $(S^4,g)$ satisfying the
assumptions of Theorem \ref{conformalgap}, a  key step in the proof is to pass the information in the integral condition (\ref{gap}) to a solution of a PDE satisfied by a metric $\hat g$ in 
the conformal class $[g]$.

\begin{Prop}[{\cite[Theorem 1.1 and Lemma 3.1]{CGY03}}]\label{chosenpde}
Suppose $(S^4, g)$ is smooth and satisfies the same conditions as in the statement of Theorem \ref{conformalgap}. Then there is a smooth metric
$\hat g \in [g]$, so that w.r.t the metric $\hat g$, 
\begin{align} \label{pde}
\sigma_2 (A)  - {1 \over 4} |W|^2 = \lambda\,,
\end{align}
where $\lambda \geq 0$ is a nonnegative constant.
Also as a consequence we have 
\begin{align} \label{inqpde}
{3\over 2 }  |\nabla W|^2 + 3 ( |\nabla E|^2 -
   {1\over 12} | \nabla R|^2) \geq 0 \,.
\end{align}
\end{Prop}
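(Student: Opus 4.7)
The plan is to obtain $\hat g$ by solving a conformally prescribed fully nonlinear curvature equation, following the framework of Chang--Gursky--Yang \cite{CGY1, CGY2}. The equation to solve within $[g]$ is
\begin{align*}
\sigma_2(A_{\hat g}) - \tfrac{1}{4}|W_{\hat g}|_{\hat g}^2 = \lambda
\end{align*}
for some constant $\lambda \geq 0$ to be determined. Writing $\hat g = e^{2w}g$, the operator $\sigma_2(A)$ becomes a fully nonlinear elliptic expression in $w$ on the admissible cone, while $|W|^2$ transforms by $e^{-4w}$, matching the conformal weight of $\sigma_2(A)$. Thus the equation has the form of a $\sigma_2$--Yamabe problem with a geometric source term, for which \cite{CGY1, CGY2} provide an existence theorem under a positive Yamabe invariant together with positivity of the appropriate conformal integral; the Yamabe hypothesis is given, so the remaining task is to verify the integral positivity.

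Next I would pin down the sign of $\lambda$ and simultaneously verify the integral positivity. Integrating \eqref{pde} against $dv_{\hat g}$, then applying the Chern--Gauss--Bonnet identity \eqref{CGB2} on $S^4$ to replace $\int_{S^4}\sigma_2(A_{\hat g})\,dv_{\hat g}$ by $16\pi^2 - \tfrac{1}{4}\int_{S^4}|W_{\hat g}|_{\hat g}^2\,dv_{\hat g}$, and using the conformal invariance $\int |W_{\hat g}|_{\hat g}^2 dv_{\hat g} = \int |W_g|_g^2 dv_g$ noted earlier in the excerpt, one obtains
\begin{align*}
\lambda \cdot \mathrm{vol}(\hat g) \;=\; 16\pi^2 - \tfrac{1}{2}\int_{S^4}|W_g|_g^2\,dv_g \;\geq\; 0,
\end{align*}
where the last inequality uses the Weyl bound \eqref{gap} of Theorem \ref{conformalgap}. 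This simultaneously forces $\lambda \geq 0$ and delivers the positivity condition required to invoke the existence theorem of \cite{CGY1, CGY2} in the first step.

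For the gradient inequality \eqref{inqpde}, I would differentiate \eqref{pde} pointwise, using the explicit form $\sigma_2(A) = \frac{1}{24}R^2 - \frac{1}{2}|E|^2$ from \eqref{s2a}, to obtain the identity
\begin{align*}
\tfrac{1}{12}R\,\nabla_k R \;=\; E_{ij}\nabla_k E^{ij} + \tfrac{1}{2}W_{ijlm}\nabla_k W^{ijlm},
\end{align*}
and combine this with the algebraic consequence $R^2 = 24\lambda + 12|E|^2 + 6|W|^2$ of \eqref{pde} and weighted Cauchy--Schwarz estimates term by term to bound $|\nabla R|^2$ in terms of $|\nabla E|^2$ and $|\nabla W|^2$ with the stated coefficients; the careful bookkeeping is carried out in \cite[Lemma 3.1]{CGY03}. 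The main obstacle of the whole proposition is the first step, namely running the degree/continuity scheme of \cite{CGY1, CGY2} for the particular conformally covariant source $\tfrac{1}{4}|W|^2$ and producing an admissible smooth solution; by contrast, the sign of $\lambda$ and the gradient inequality are formal consequences once the PDE is in hand.
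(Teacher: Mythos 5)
Your overall strategy matches the source: the paper gives no proof of this proposition but imports it from \cite{CGY03} (Theorem 1.1 for existence, Lemma 3.1 for the gradient inequality), and your reconstruction --- solve the $\sigma_2$-equation with source $\tfrac14|W|^2$ via \cite{CGY1,CGY2}, determine the sign of $\lambda$ by integrating \eqref{pde} against $dv_{\hat g}$ and invoking Chern--Gauss--Bonnet \eqref{CGB2} together with the conformal invariance of $\int|W|^2\,dv$, and derive \eqref{inqpde} by differentiating \eqref{pde} and applying a weighted Cauchy--Schwarz with the algebraic relation $R^2=24\lambda+12|E|^2+6|W|^2$ --- is exactly how that goes. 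In particular the identity $\lambda\cdot\mathrm{vol}(\hat g)=16\pi^2-\tfrac12\int|W|^2\,dv\geq 0$ is correct, and the gradient inequality does follow from it since $\lambda\geq 0$ gives $R^2\geq 12|E|^2+6|W|^2$, which is what makes the Cauchy--Schwarz weights close up.

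The one place your argument as written breaks down is the borderline case $\int_{S^4}|W|^2\,dv=32\pi^2$, i.e.\ $\lambda=0$, which is permitted by \eqref{gap} and is precisely the case needed for Case 2 of the proof of Theorem \ref{conformalgap}. There the right-hand side $\tfrac14|W|^2+\lambda$ vanishes on the zero set of $W$, the equation becomes degenerate elliptic there, and the existence scheme of \cite{CGY1,CGY2} --- which requires a positive right-hand side, equivalently strict positivity of the conformal invariant $16\pi^2-\tfrac12\int|W|^2\,dv$ --- does not apply directly. In \cite{CGY03} this is handled by solving a family of regularized non-degenerate equations and passing to the limit; the limiting metric is a priori only $C^{1,1}$ and smooth away from $\{W=0\}$, which is exactly why the paper's Proposition \ref{prop4.1} is stated for $w\in C^\infty(M_+^4)\cap C^{1,1}(M^4)$ and why its conclusion (i), that $\hat g\in C^\infty(M^4)$, is a nontrivial a posteriori statement rather than an input. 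Your integration argument still correctly pins down the sign of $\lambda$ once a solution exists, and your derivation of \eqref{inqpde} only uses $\lambda\geq 0$ and $R>0$, so both of those steps survive; it is only the existence step that needs the extra approximation argument to cover equality in \eqref{gap}.
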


From now on for the rest of this section we work on the metric $\hat g$ chosen in Proposition \ref{chosenpde} above.
\vskip .1in

For such a metric, by combining the identities (\ref{identity1}) and (\ref{identity2}), and applying the inequality 
(\ref{inqpde}), we obtain that 
\begin{align} 
0& = \int_{S^4} \left\{ {3\over 2} 
   |\nabla W|^2 + 3 \left(|\nabla E|^2 - {1\over 12} 
       |\nabla R|^2 \right) \right.\\
&\qquad\qquad + 6trE^3 + R|E|^2 - 9 
      W_{ij\kappa \ell} E_{i\kappa}E_{j\ell}\notag  \\
&\qquad \qquad \left. - 108 \det W^+ - 108 \det W^- +
   {3\over 4} R | W |^2 \right\} dv\notag \\
&\geq \int_{S^4} 6trE^3 + R|E|^2 - 9 
      W_{ij\kappa \ell} E_{i\kappa}E_{j\ell} \notag \\ 
&\qquad \qquad  - 108 \det W^+ - 108  \det W^- +
   {3\over 4} R | W |^2\ dv  \,.\notag
\end{align} 

We now denote and analyze the integrand,  
\begin{align}\label{integrand}
G : = 6trE^3 + R|E|^2 - 9 
      W_{ij\kappa \ell} E_{i\kappa}E_{j\ell}   - 108 \det W^+ - 108  \det W^- +
   {3\over 4} R | W |^2  \,.
\end{align}

Let 
\begin{align}
S_+^4 & = \{ x \in S^4 : |W_g| > 0 \} ,\notag \\
S_0^4 &= \{ x \in S^4 : | W_g| = 0 \} \,.\notag
\end{align}

Observe that at points $p \in S_0^4$, we have
\begin{align} \label{estimateG}
G = 6trE^3 + R|E|^2 \geq - 2 \sqrt{3} \,|E|^2 + R|E|^2
 = |E|^2 \big(R - 2 \sqrt{3} \, |E| \big) \;
\end{align}

Since $\sigma_2(A) \geq 0$ on $S_0^4$, 
i.e. 
$0 \leq  - {\frac{1}{2}} |E|^2 + {\frac{1}{ 24}} R^2 $, which implies 
$ R^2 \geq  12 |E|^2$, then
\begin{align} \label{estimateG2}
R \geq 2 \sqrt{3}\,|E| \,.
\end{align}

Combining the inequalities in (\ref{estimateG}) and (\ref{estimateG2}), we see that the integrand $G$ in \eqref{integrand} is non-negative on the set $S_0^4$.\\

The next step is the delicate part of the proof. For a point $p\in S_+^4$, 
we recall $\hat g $ satisfies 
$$
\sigma_2(A) = {\frac{1}{4}} | W|^2 + \lambda  = \Vert W \Vert^2 + \lambda
  = \Vert W^+\Vert^2 + \vert W^- \Vert ^2 + \lambda.
$$
for some constant $\lambda \geq 0$, where where $\|\cdot\|$ denotes the norm of $W$ when viewed as an endomorphism of $\Lambda^2 T^* M$, related to the norm of $W$ as a tensor by $\|W\|=\frac{1}{4}|W|^2$.
Therefore,
\begin{align}
R = \sqrt{12 |E|^2 + 24 \Vert W^+\Vert ^2
     + 24\Vert W^-\Vert ^2 + \lambda }.\notag
\end{align}

Now suppose we let 
\begin{align}
\tilde R = \sqrt{12 |E|^2 + 24 \Vert W^+\Vert ^2 + 24\Vert W^-\Vert ^2} \notag
\end{align}
and denote by $\tilde G$ the same expression that defines 
$G$ but with $R$ substituted by $\tilde R$.
A crucial step (Proposition 4.4 in the proof of Theorem B in \cite{CGY03}) is to show that one can estimate $\tilde G$ at points
$p$ on the set $S_+^4$ and show that the integrand $\tilde G$ 
is non-negative at such points. We now split the rest of the proof into two cases.\\ 

\noindent{Case 1: when $\lambda >0$.}

Note that the above argument implies in this case when $ \lambda $ is positive in the equation (\ref{pde}), which
corresponds to the case when  
$$ \int_{S^4} |W|^2 dv < 32 \pi^2, $$
that at points $p \in S_+^4$, we have
$G (p) \geq {\tilde G} + \frac{1}{4} (R - \tilde R)|W|^2 > 0.$ This coupled with the facts which have 
already established earlier, namely that $G$ is non-negative in the set $S_0^4$ and by (2.16) that the integration of $G$ over the whole $S^4$ is non-positive, implies
that the set $S_+^4$ is a set of measure zero. This coupled with the fact that the metric
$\hat g$ is a smooth metric implies that actually the Weyl curvature of the metric $\hat g$ vanishes identically; hence the statement of the 
Theorem \ref{conformalgap} is valid in this case.\\

\noindent{Case 2, when $\lambda = 0$.}

We now address the case when 
$$ \int_{S^4} |W|^2 dv = 32 \pi^2, $$
or equivalently when $\lambda =0.$ We recall 
another result
in \cite{CGY03}, which was established based
on Proposition 4.1 in that article. We remark that the statement of the Proposition holds for general compact closed 
4-manifolds $M$ but later we will apply it for the special case when $M$ is $S^4$. \\

\begin{Prop}[{\cite[Proposition 4.1]{CGY03}}] \label{prop4.1}
Suppose $\hat g$ is a metric in $[g]$, 
$\hat g = e^{2w}g$ on $M^4$ with $w \in C^\infty (M_+^4) \cap C^{1,1}(M^4)$
which satisfies 
\begin{align} \label{limitpde}
\sigma_2(A) \, = \, \frac{1}{4} |W|^2 \, \text{on } \, M_+^4, 
\end{align}
with 
\begin{align} \label{cinq}
\int_{M^4} G dv\leq 0.
\end{align}
Then we have
\begin{enumerate}[(i)]
\item $\hat g  \in C^\infty (M^4)$; \newline
\item $\hat g$ is Einstein; \newline
\item\label{41pt3} either $W^+ \equiv 0$ or $W^- \equiv 0$ on $M^4$.
\end{enumerate}
\end{Prop}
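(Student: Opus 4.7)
The plan is to treat Proposition \ref{prop4.1} as a rigidity statement extracted from the equality case of the pointwise bound $G \geq 0$ that was already established (up to the term $\tfrac14(R-\tilde R)|W|^2$) in the body of the Theorem \ref{conformalgap} argument. When $\lambda = 0$ one has $R \equiv \tilde R$, so on $S_+^4$ the integrand $G$ coincides with $\tilde G$, and the strict positivity available in Case 1 collapses into an identity. The task is then to extract the three conclusions from this identity.

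First, I would assemble the pointwise non-negativity globally. On $S_0^4$ the calculation \eqref{estimateG}--\eqref{estimateG2} already gives $G \geq 0$ using only $\sigma_2(A) \geq 0$, which is true in the $C^{1,1}$ sense on $M^4$ (and classically wherever $w$ is smooth). On $S_+^4$, the result of Proposition 4.4 of \cite{CGY03} bounds $\tilde G \geq 0$, and with $\lambda = 0$ this bound applies directly to $G$. Combined with the hypothesis $\int_{M^4} G\,dv \leq 0$ and the Weitzenböck/Bach-flat identity (2.16), one obtains $G \equiv 0$ almost everywhere, and also that the non-negative gradient term $\tfrac{3}{2}|\nabla W|^2 + 3(|\nabla E|^2 - \tfrac{1}{12}|\nabla R|^2)$ integrates to zero. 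The inequality \eqref{inqpde} then forces this gradient term to vanish identically.

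Second, I would analyze the equality case of $G = 0$ point by point. On $S_0^4$ the chain \eqref{estimateG}--\eqref{estimateG2} is saturated only when $R = 2\sqrt{3}\,|E|$ and $\text{tr}\,E^3 = -\tfrac{1}{\sqrt{3}}|E|^3$, i.e.\ $E$ has eigenvalues in the ratio $(1,1,1,-3)$ up to sign; but combined with $\sigma_2(A) = 0$ there, this gives either $E \equiv 0$ on $S_0^4$ or a rigid eigenvalue configuration. On $S_+^4$ the equality case of the $\tilde G \geq 0$ estimate of Proposition 4.4 in \cite{CGY03} is the crucial point: tracking the Singer--Thorpe decomposition used in the proof of Proposition \ref{prop2}, equality forces the principal frame of $E$ to be simultaneously an eigenframe of $W^+$ and $W^-$, and moreover forces either $W^+$ or $W^-$ to have its three eigenvalues collapse in the specific way that saturates $\det W^\pm \leq \tfrac{1}{3\sqrt 6}\|W^\pm\|^3$. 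This is where one reads off alternative \eqref{41pt3}.

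Third, with the vanishing of the gradient term and the pointwise rigidity in hand, I would deduce that $E \equiv 0$ globally (Einstein) and that one of $W^\pm$ vanishes identically. Here one uses $|\nabla W|^2 \equiv 0$ to propagate the pointwise alternative ($W^+ \equiv 0$ vs.\ $W^- \equiv 0$) from a single point to all of $M^4$ by connectedness, and uses the identity $|\nabla E|^2 = \tfrac{1}{12}|\nabla R|^2$ combined with the pointwise eigenvalue rigidity to kill $E$. Smoothness (i) is then automatic: any $C^{1,1}$ Einstein metric is real analytic in harmonic coordinates.

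The main obstacle, as in the original argument, is the second step: the detailed algebraic rigidity of the pointwise inequality $\tilde G \geq 0$. The interplay among the cubic invariants $\text{tr}\,E^3$, $W_{ijk\ell}E_{ik}E_{j\ell}$, and $\det W^\pm$ is delicate, and it is precisely this algebra that distinguishes the two alternatives in \eqref{41pt3}; handling it cleanly requires working in a frame adapted to the Singer--Thorpe splitting of $\Lambda^2 T^* M^4$ used in the derivation of \eqref{identity2}.
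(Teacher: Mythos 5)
The paper does not actually prove this proposition: it is quoted verbatim from \cite{CGY03} (Proposition 4.1 there) and used as a black box to close Case 2 of Theorem \ref{conformalgap}, so there is no in-paper argument to compare yours against. Your outline does track the strategy of the original source --- pointwise $G\ge 0$ on $M_0^4$ and on $M_+^4$, the hypothesis $\int G\,dv\le 0$ forcing $G\equiv 0$ together with the vanishing of the gradient term via \eqref{identity1}, \eqref{identity2} and \eqref{inqpde}, then an equality-case analysis --- but as a proof it has genuine gaps. The most serious one concerns conclusion (i), which you cannot defer to the end. The metric $\hat g$ is only $C^{1,1}$ across $\partial M_+^4$ precisely because the equation $\sigma_2(A)=\frac14|W|^2$ degenerates where $W=0$; consequently $\nabla W$, $\nabla E$, $\nabla R$ for $\hat g$ are a priori only distributions, and every integration by parts behind \eqref{identity1}, \eqref{identity2} and the pointwise inequality \eqref{inqpde} must be justified for such a metric. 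Invoking ``any $C^{1,1}$ Einstein metric is real analytic'' is circular, since you need the smooth-metric identities to prove $\hat g$ is Einstein in the first place. Handling this regularity is one of the substantive points of the original proof, not a footnote.

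Two further deductions do not follow as stated. The vanishing of $\int\bigl\{\frac32|\nabla W|^2+3(|\nabla E|^2-\frac1{12}|\nabla R|^2)\bigr\}dv$ combined with \eqref{inqpde} gives only that the \emph{combination} $\frac32|\nabla W|^2+3|\nabla E|^2-\frac14|\nabla R|^2$ vanishes pointwise, not that $|\nabla W|\equiv 0$; you may only use parallelism of $W$ to globalize the alternative in \eqref{41pt3} after $E\equiv 0$ and $R=\mathrm{const}$ are already in hand (at which point it does follow). And the heart of the matter --- that $G\equiv 0$ on $M_+^4$ forces $E=0$ and forces one of $W^{\pm}$ to vanish at each point --- is exactly the equality case of the algebraic estimate in Proposition 4.4 of \cite{CGY03}, which you explicitly decline to carry out. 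Without that computation (in a frame adapted to the Singer--Thorpe splitting, tracking saturation of the determinant bound for $W^{\pm}$ and of the cross terms $W_{ijkl}E_{ik}E_{jl}$ and $\mathrm{tr}\,E^3$), the proposal is an accurate table of contents for the proof rather than a proof.
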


We now apply Proposition \ref{prop4.1} to finish the proof of Theorem \ref{conformalgap}
by asserting that when the manifold $M^4$ is topologically $S^4$, by the Hirzebruch signature formula we have
$$
0 = 12 \pi^2 \tau (S^4) = \int_{S^4} |W^+|^2 dv - \int_{S^4} |W^-|^2 dv.
$$
Thus condition \eqref{41pt3} in Proposition \ref{prop4.1}
implies that both $W^+$ and $W^-$ vanish identically, so the metric g is conformally equivalent to the canonical metric $g_c$.

We have thus finished the proof of Theorem \ref{conformalgap}.
 


\section{Monotonicity along the Ricci flow}\label{monoricsec}

Recall that Theorems A and A' can be viewed as integral versions of Margerin's pointwise pinching sphere theorem \cite{Ma}. This latter result, as described in the Introduction, is obtained by studying the convergence of the normalized Ricci flow when starting from a metric satisfying a pointwise ``weak pinching'' property. In contrast, Theorems A and A' are established by a combination of conformal geometry techniques and Margerin's result, and therefore do not imply that the normalized Ricci flow must converge when starting from metrics satisfying integral curvature pinching as in condition \eqref{Acond}. A natural question then is whether one can in fact establish that this flow must converge when starting from metrics satisfying such integral pinching via monotonicity of some integral form of the curvature along the Ricci flow --- if so this would give a direct proof of Theorems A and A', without a need to quote Margerin's work as was done in \cite{CGY03}. 



A most natural integral quantity to try is the
$L^2$ norm of the Weyl tensor. Thus the question we ask is: Under the assumption that
we start with a ``nice" metric on $S^4$, e.g. some Yamabe metric with constant positive scalar curvature, and the $L^2$ norm of Weyl tensor
less than $32 \pi^2$ as in the statement of
assumption of Theorem A, does the $L^2 $ norm of the Weyl tensor monotonically decrease to 
zero along the (normalized) Ricci flow and does the metric converge to the standard metric? It turns out we can only partially answer this
question in the sense that we can achieve this
under the assumption the $L^2$ norm of
the Weyl tensor is sufficiently
small (of an estimable size):

\begin{Thm}\label{3main}
There exists $\epsilon_0>0$ such that if $(S^4,g_0)$ is a Yamabe metric with $Y(S^4,[g_0])>0$ and $\int |W|^2dv<\epsilon_0$, then 
\begin{align}
\int (\|W\|^p+\frac{1}{2}|E|^p)\,\, dv_{g(t)}\quad\text{and}\quad\int (\|W\|^p+\frac{1}{2}|E|^p+\frac{1}{10^6}|R-\ol{R}|^p )\,\, dv_{g(t)}\label{int3}
\end{align}
are monotonically and exponentially decreasing under the normalized Ricci flow $(S^4,g(t))$ starting from $(S^4,g_0)$, for any $p\in\left[2,\frac{8}{3}\right]$.

Moreover, $\|\|W\|+|E|+|R-\ol{R}|\|_\infty\rightarrow 0$ as $t\rightarrow\infty$ and the flow converges exponentially to $(S^4,g_c)$.
\end{Thm}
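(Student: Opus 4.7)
The plan is to derive, under the normalized Ricci flow on $S^4$, an energy-type differential inequality $\frac{d}{dt} F(t) \leq -\delta F(t)$ for $F(t)$ equal to each of the two integrals in \eqref{int3}, yielding exponential decay; the proof then finishes with a bootstrap to $L^\infty$ decay and standard Ricci-flow stability around the round metric.

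For the first step I would compute the evolution equations. Under $\partial_t g = -2\,\mathrm{Ric} + \tfrac{\ol R}{2} g$, Hamilton's reaction-diffusion formulas give, for each curvature quantity $T \in \{W, E, R-\ol R\}$,
\begin{align*}
\partial_t |T|^2 = \Delta |T|^2 - 2|\nabla T|^2 + Q_T(W, E, R-\ol R),
\end{align*}
with $Q_T$ at most cubic, and $\partial_t\,dv = (\ol R - R)\,dv$. Multiplying by $\tfrac{p}{2}|T|^{p-2}$, integrating, and integrating by parts --- together with the refined Kato inequality $|\nabla T|^2 \geq \kappa |\nabla |T||^2$, valid with $\kappa > 1$ for Weyl and traceless-Ricci tensors in four dimensions --- produces
\begin{align*}
\frac{d}{dt}\int |T|^p\,dv = -c_p \int |T|^{p-2} |\nabla |T||^2\,dv + (\text{cubic in curvature}),
\end{align*}
with $c_p > 0$ for $p \in [2, 8/3]$. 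The cubic remainder consists of $\int |W|^{p+1}\,dv$, $\int |E|^{p+1}\,dv$, mixed products such as $\int |W|^{p-1}|E|^2\,dv$, and volume corrections of the form $\int |T|^p(\ol R - R)\,dv$.

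The second step is to absorb the cubic remainder using the smallness of $\|W\|_{L^2}$. Setting $u = |T|^{p/2}$ and applying the Sobolev inequality $\|u\|_{L^4}^2 \leq C_S(\|\nabla u\|_{L^2}^2 + \|u\|_{L^2}^2)$ on $(S^4, g(t))$ --- whose constant $C_S$ I would control uniformly along the flow by a bootstrap from the preserved smallness of the curvature norms --- together with H\"older's inequality yields
\begin{align*}
\int |T|^{p+1}\,dv \leq C\|T\|_{L^2}\!\left(\int |T|^{p-2}|\nabla |T||^2\,dv + \int |T|^p\,dv\right),
\end{align*}
and analogous bounds for the mixed and scalar coupling terms. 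The initial hypothesis $\int |W|^2\,dv < \epsilon_0$ combined with the Chern--Gauss--Bonnet identity for a Yamabe metric with $Y(S^4, [g_0]) > 0$ (so $R$ is a positive constant and therefore $\int|E|^2\,dv$ is small as well) ensures that $\|W\|_{L^2} + \|E\|_{L^2}$ stays below a chosen threshold; then the gradient contribution of the cubic terms is absorbed into the negative $-c_p\int |T|^{p-2}|\nabla |T||^2\,dv$, leaving only a small multiple of $\int |T|^p\,dv$.

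Finally, to promote boundedness to exponential decay, I would invoke Poincar\'e-type inequalities adapted to each tensor: for $R-\ol R$ the mean-zero property and the Poincar\'e inequality on $(S^4, g(t))$ yield $\int |R-\ol R|^p\,dv \leq C_0 \int |R-\ol R|^{p-2}|\nabla R|^2\,dv$ via interpolation; for $|W|$ and $|E|$, the quantitative positivity of the associated Lichnerowicz-type operators on $(S^4, g_c)$ passes to nearby metrics and gives the analogue $\int |T|^p\,dv \leq C_0\int |T|^{p-2}|\nabla |T||^2\,dv$. Combining these with the previous step closes
\begin{align*}
\frac{d}{dt}\!\int\!\!\left(\|W\|^p + \tfrac{1}{2}|E|^p\right)\!dv \leq -\delta\!\int\!\!\left(\|W\|^p + \tfrac{1}{2}|E|^p\right)\!dv
\end{align*}
for some $\delta > 0$, and analogously for the augmented quantity including $|R-\ol R|^p$; the precise weights $\tfrac{1}{2}$ and $10^{-6}$ are chosen to dominate the coupling between the three evolutions. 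Exponential $L^{8/3}$ decay then upgrades to $L^\infty$ decay via Moser iteration on the reaction-diffusion inequalities, parabolic regularity gives decay of all higher derivatives, and the stability theory of $(S^4, g_c)$ as an attracting fixed point of the normalized Ricci flow produces exponential $C^\infty$ convergence. The principal obstacle I anticipate is the self-consistent bootstrap: Sobolev and Poincar\'e constants depend on the evolving metric, so their uniform control along the flow must be maintained while the argument is being run, forcing $\epsilon_0$ to be chosen in terms of itself; the cutoff $p \leq 8/3$ should reflect the Sobolev exponent threshold past which the absorption scheme ceases to close.
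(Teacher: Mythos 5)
Your outline correctly identifies the reaction--diffusion structure and the absorption of cubic terms by smallness of $\|W\|_{L^2}+\|E\|_{L^2}$, but the two places where you are vaguest are exactly where the real content lies, and as proposed they do not close. First, your mechanism for upgrading monotonicity to exponential decay --- Poincar\'e inequalities for $R-\ol R$ and ``quantitative positivity of Lichnerowicz-type operators on $(S^4,g_c)$ passing to nearby metrics'' --- is circular: at the stage where you are proving the differential inequality you have no pointwise closeness to $g_c$, only smallness of conformally invariant $L^2$ curvature integrals, so there is no license to transfer spectral gaps from the round metric. The paper needs none of this. After normalizing so that $\vol(g(t))\equiv 1$, H\"older gives $G_{4+2\delta}(t)^{1/2}\geq G_{2+\delta}(t)$, so the inequality $\frac{d}{dt}G_{2+\delta}+C\,G_{4+2\delta}^{1/2}\leq 0$ (which is what the Yamabe--Sobolev absorption actually produces, since the good term is $-c\,Y\|T\|_{4+2\delta}^{2+\delta}$) is already the exponential-decay ODE. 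Relatedly, your worry about the Sobolev constant ``being chosen in terms of itself'' is resolved not by a bootstrap but by using the \emph{conformally invariant} Sobolev inequality \eqref{ysob}, whose constant is $Y(S^4,[g(t)])$ and is bounded below via Gauss--Bonnet by $[24(16\pi^2-\int\|W\|^2)]^{1/2}$ once one first proves (a short, self-contained continuity argument) that $F_2(t)=\int\|W\|^2+\frac12|E|^2$ is decreasing.

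Second, your treatment of the $|R-\ol R|^p$ term hides a genuine obstruction. The evolution of $\int|R-\ol R|^{2+\delta}$ produces the reaction term $(2+\delta)\,\ol R\int|R-\ol R|^{2+\delta}\,dv$, which is positive, of the same order as the quantity being estimated, and carries the coefficient $\ol R\sim O(1)$ --- it cannot be absorbed by smallness of the curvature integrals, nor by a Poincar\'e inequality. The paper's device is conformal: comparing to the Yamabe metric in $[g(t)]$ gives $\int(R-\ol R)^2\,dv\leq 12\int|E|^2\,dv$ (Proposition \ref{R2boundp}), which together with Young's inequality converts the bad term into a multiple of $\ol R\int|E|^{2+\delta}$; this is then cancelled by \emph{retaining} the negative term $-\frac{1}{12}\ol R\int|E|^{2+\delta}$ that comes from splitting $R=(R-\ol R)+\ol R$ in the $E$-evolution (using $\ol R>0$, i.e.\ positivity of the Yamabe invariant). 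The weight $10^{-6}$ is not a generic ``choice to dominate the coupling'' but is forced by this cancellation. Without identifying this mechanism your differential inequality for the augmented functional does not close. The remaining steps (Moser iteration via Yang's parabolic estimate with the uniform Sobolev inequality, long-time existence, and convergence) are as you describe.
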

\begin{Rem}
Above, $\ol{R}$ denotes the average scalar curvature over $S^4$, and 
where $\|\cdot\|$ denotes the norm of $W$ when viewed as an endomorphism of $\Lambda^2 T^* M$, related to the norm of $W$ as a tensor by $\|W\|=\frac{1}{4}|W|^2$. From the arguments used to prove Theorem \ref{3main} one could for instance take $\epsilon_0=\frac{1}{2000}\pi^2$.
\end{Rem}

The convergence part of the above result is originally due to Gursky \cite{Gursky}, but unlike Gursky's and other previous studies of integral pinching under the Ricci flow \cite{HV,CWY20}, Theorem \ref{3main} does not rely on quoting the pointwise pinching work of Huisken, Margerin, and Nishikawa to obtain convergence of the flow, and is instead proven directly from the monotonicity of the integral quantities \eqref{int3}. We also remark that the requirement that we start with a ``nice'' metric is natural in combination with the restriction on the size $\int |W|^2\ dv$, in view of the neckpinch examples of Angenent--Knopf \cite{AK} which show that even when $\int |W|^2\ dv=0$ as for metrics in $[g_c]$, some metrics in this conformal class do not converge under the Ricci flow to $(S^4,g_c)$.

In the rest of this section we collect and justify the necessary estimates to prove Theorem \ref{3main}.

\subsection{Preliminaries}


We say that $(S^4,g(t))$ is a Ricci flow starting from some intial metric $g(0)=g_0$ on $S^4$ if $g(t)$ satisfies the equation $\frac{\partial}{\partial t} g=-2\text{Ric}$. We will also work with the normalized Ricci flow equation, which in dimension four is given by $\frac{\partial}{\partial t} g=-2\text{Ric}+\frac{1}{2}\overline{R} g$. Finally we recall two additional facts from conformal geometry:

First, on $S^4$ we rewrite the Chern--Gauss-Bonnet formula as
\begin{align}
16\pi^2=\int\|W\|^2-\frac{1}{2}|E|^2+\frac{1}{24}R^2\ dv_g.\label{CGB}
\end{align}

Second, we have the following conformally invariant Sobolev inequality:
\begin{align}
Y(S^4,[g]) \|u\|_4^2\leq 6\int\|\nabla u\|^2\ dv_g+\int R u^2\ dv_g.\label{ysob}
\end{align}
Note in particular that the Yamabe constant $Y(S^4,[g])$ which appears above depends only on the conformal class of the metric $g$.




From now on, all integration will be assumed to be over $S^4$.  We start with some useful consequences of \eqref{CGB}.

\begin{Prop}\label{cons1}
Let $(S^4,[g])$ have positive Yamabe constant. Then if $g_Y\in[g]$ is a Yamabe metric,
\begin{enumerate}[(a)]
\item $\frac{1}{24}\int R_{g_Y}^2\ dv_{g_Y}\leq 16\pi^2$,\label{YR}
\item $\frac{1}{2}\int|E|^2_{g_Y}\ dv_{g_Y}\leq\int\|W\|^2_g\ dv_g$,\label{EW}
\item $Y(S^4,[g])\geq\left[24\left(16\pi^2-\int\|W\|^2\ dv_g\right)\right]^{\frac{1}{2}}$.\label{Ybound}
\end{enumerate}
\end{Prop}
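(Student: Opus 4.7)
My plan rests on two inputs: the Chern--Gauss--Bonnet identity \eqref{CGB} evaluated on the Yamabe representative $g_Y$, and Aubin's sharp bound $Y(S^4,[g])\leq Y(S^4,[g_c])=8\pi\sqrt{6}$, equivalently $Y(S^4,[g])^2/24 \leq 16\pi^2$. Throughout I will also use the conformal invariance $\int \|W_{g_Y}\|^2 \, dv_{g_Y}=\int \|W_g\|^2\, dv_g$ of the Weyl $L^2$ norm.

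For \eqref{YR}, the Yamabe property forces $R_{g_Y}$ to be constant, so
\[
\int R_{g_Y}^2\, dv_{g_Y}=R_{g_Y}^2\cdot\mathrm{vol}(g_Y)=Y(S^4,[g])^2,
\]
and Aubin's bound gives the conclusion. For \eqref{EW}, I rearrange \eqref{CGB} on $g_Y$ as
\[
\tfrac{1}{2}\int |E_{g_Y}|^2 \, dv_{g_Y}=\int \|W_{g_Y}\|^2\, dv_{g_Y}+\tfrac{1}{24}\int R_{g_Y}^2\, dv_{g_Y}-16\pi^2,
\]
and observe that by \eqref{YR} the last two terms sum to a nonpositive quantity, while conformal invariance converts $\|W_{g_Y}\|$ into $\|W_g\|$. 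For \eqref{Ybound}, I drop the nonnegative $|E|^2$ term from the same rearrangement to get
\[
Y(S^4,[g])^2=\int R_{g_Y}^2\, dv_{g_Y}\geq 24\Bigl(16\pi^2-\int \|W_g\|^2\, dv_g\Bigr),
\]
and then take square roots.

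No step presents a substantive obstacle; the only non-elementary ingredient is Aubin's sharp bound on the Yamabe invariant of $S^4$. The inequality \eqref{Ybound} is of course informative only when $\int \|W\|^2\, dv_g\leq 16\pi^2$, but in the reverse regime the right-hand side is imaginary and the bound is vacuous against the standing hypothesis $Y(S^4,[g])>0$.
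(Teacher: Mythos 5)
Your proof is correct and follows essentially the same route as the paper: identify $\int R_{g_Y}^2\,dv_{g_Y}$ with $Y(S^4,[g])^2$ using constancy of $R_{g_Y}$, invoke Aubin's sharp bound for (a), and then rearrange the Chern--Gauss--Bonnet identity together with the conformal invariance of $\int\|W\|^2\,dv$ to obtain (b) and (c).
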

\begin{proof}
For \eqref{YR}, first note that $\int R_{g_Y}^2\ dv_{g_Y}=Y(S^4,g_Y)^2$. By Aubin's work on the Yamabe problem we know that $Y(S^4,g_Y)$ is always bounded from above by the Yamabe constant of the conformal class of the round $(S^4,g_c)$, hence
\begin{align}
\frac{1}{24}\int R_{g_Y}^2\ dv_{g_Y}\leq\frac{1}{24}Y(S^4,g_c)^2\leq 16\pi^2.\notag
\end{align}
Then \eqref{EW} follows by combining \eqref{YR} with \eqref{CGB} and noting that $\|W\|^2_g\ dv_g$ is a pointwise conformal invariant.

Finally \eqref{Ybound} follows from combining $\int R_{g_Y}^2\ dv_{g_Y}=Y(S^4,g_Y)^2$, the conformal invariance of $\|W\|^2_g\ dv_g$, and \eqref{CGB}.
\end{proof}

We now point out a key fact which we shall use throughout this note.  
Since $\int\|W\|^2_g\ dv_g$ is a conformal invariant, then by \eqref{CGB} we see that
\begin{align}
\frac{1}{24}\int R_g^2\ dv_g-\frac{1}{2}\int|E|^2_g\ dv_g=16\pi^2-\int\|W\|^2_g\ dv_g
\end{align}
is also a conformal invariant. This leads us to the following comparison between Yamabe metrics and other metrics in the same conformal class.

\begin{Prop}\label{R2boundp}
For any conformal class $(S^4,[g])$ with positive Yamabe constant, we have for all $g\in[g]$ that
\begin{align}
\frac{1}{24}\int(R_g-\ol{R_g})^2\ dv_g&=\frac{1}{24}\left(\int R_g^2\ dv_g-\ol{R_g}^2\vol(g)\right)\label{RE}
\\
&\leq\frac{1}{24}\left(\int R_g^2\ dv_g-R_{g_Y}^2\vol(g_Y)\right)\notag
\\
&=\frac{1}{2}\left(\int |E|^2_g\ dv_g-\int|E|^2_{g_Y}\ dv_{g_Y}\right).\notag
\end{align}
Above, $\ol{R_g}$ denotes the average of scalar curvature $\frac{\int R_g\ dv_g}{\vol(g)}$.
\end{Prop}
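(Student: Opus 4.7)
The plan is to verify each of the three claims in the displayed chain separately; none of them requires a hard estimate, and the whole proposition amounts to combining the definition of the Yamabe invariant with the conformal invariance of $\int \|W\|^2\,dv$ expressed through the Chern--Gauss--Bonnet formula \eqref{CGB}.

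First, the equality
\[
\tfrac{1}{24}\int (R_g-\ol{R_g})^2\, dv_g \,=\, \tfrac{1}{24}\!\left(\int R_g^2\, dv_g - \ol{R_g}^{\,2}\,\vol(g)\right)
\]
is a purely algebraic identity: expand the square, use that $\int R_g\,dv_g = \ol{R_g}\,\vol(g)$ by the definition of $\ol{R_g}$, and the cross term absorbs into $\ol{R_g}^{\,2}\vol(g)$.

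Second, for the middle inequality I would invoke the variational definition
$Y(S^4,[g]) = \inf_{\tilde g\in[g]} \vol(\tilde g)^{-1/2}\int R_{\tilde g}\,dv_{\tilde g}$,
which gives $\vol(g)^{-1/2}\int R_g\,dv_g \geq Y(S^4,[g])$ for every $g\in[g]$. The positivity hypothesis on the Yamabe constant ensures that this quantity is nonnegative (so we may square without switching the inequality), yielding $\ol{R_g}^{\,2}\,\vol(g) = (\int R_g\,dv_g)^2/\vol(g) \geq Y(S^4,[g])^2$. On the other hand, a Yamabe metric $g_Y$ attains the infimum and has constant scalar curvature, so $\int R_{g_Y}^2\,dv_{g_Y} = R_{g_Y}^2\,\vol(g_Y) = Y(S^4,[g])^2$. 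Combining these gives $\ol{R_g}^{\,2}\vol(g) \geq R_{g_Y}^2\,\vol(g_Y)$, which is exactly the claimed inequality after dividing by $24$ and inserting into the top line.

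Third, the last equality follows from applying the Chern--Gauss--Bonnet formula \eqref{CGB} to both $g$ and $g_Y$ and subtracting. The key point is that $\int \|W\|^2\,dv$ is a pointwise conformal invariant, so
\[
\tfrac{1}{24}\int R_g^2\,dv_g - \tfrac{1}{2}\int |E|^2_g\,dv_g \,=\, \tfrac{1}{24}\int R_{g_Y}^2\,dv_{g_Y} - \tfrac{1}{2}\int |E|^2_{g_Y}\,dv_{g_Y},
\]
and constancy of $R_{g_Y}$ turns $\int R_{g_Y}^2\,dv_{g_Y}$ into $R_{g_Y}^2\,\vol(g_Y)$. Rearranging yields the third line.

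The only step that is not purely formal is the second one, and the real content there is simply the variational characterization of $Y(S^4,[g])$ together with the sign check permitting the squaring; there is no analytic obstacle of any depth in this proposition.
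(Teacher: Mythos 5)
Your proof is correct and follows essentially the same route as the paper: the algebraic expansion for the first equality, the variational characterization $R_{g_Y}\vol(g_Y)^{1/2}=Y(S^4,[g])\leq \ol{R_g}\vol(g)^{1/2}$ for the middle inequality, and the conformal invariance of $\tfrac{1}{24}\int R^2\,dv-\tfrac{1}{2}\int|E|^2\,dv$ (via Chern--Gauss--Bonnet and the invariance of $\int\|W\|^2\,dv$) for the last equality. Your explicit remark that positivity of the Yamabe constant is what permits squaring the inequality is a small point the paper leaves implicit.
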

\begin{proof}
The second line follows from the fact that
\begin{align}
R_{g_Y}\vol(g_Y)^{1/2}=Y(S^4,[g])\leq \frac{\int R_g\ dv_g}{\vol(g)^{1/2}},\notag
\end{align}
so that $R_{g_Y}^2\vol(g_Y)\leq\ol{R_g}^2\vol(g)$; the last inequality follows from \eqref{RE}.
\end{proof}

Finally we record the evolution equations for the Weyl, Einstein, and scalar curvatures under the Ricci flow on a compact $4$-manifold. These may be found for instance in \cite[Lemma 4.5]{CGZ}, besides the evolution of scalar curvature which is well known.
\begin{Prop}\label{evoform}
Under the Ricci flow on a compact $4$-manifold we have
\begin{align}
&\frac{\partial}{\partial t}\|W\|^2=\Delta\|W\|^2-2\|\nabla W\|^2+36\det W+WEE.\label{Wevo}
\\
&\frac{\partial}{\partial t}|E|^2=\Delta |E|^2-2|\nabla E|^2+4WEE-4\operatorname{tr}E^3+\frac{2}{3}R|E|^2,\label{Eevo}
\\
&\frac{\partial}{\partial t}(R-\ol{R})^2=\Delta(R-\ol{R})^2-2|\nabla R|^2\label{Revo}
\\
&\qquad\qquad\qquad+(R-\ol{R})[4|E|^2-4\ol{|E|^2}+R^2-2\ol{R}^2+\ol{R^2}]\notag
\end{align}
\end{Prop}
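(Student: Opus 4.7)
The plan is to derive all three equations from the single master evolution equation for the full Riemann tensor under Ricci flow --- Hamilton's $\partial_t R_{ijkl} = \Delta R_{ijkl} + Q(\mathrm{Rm})_{ijkl}$ with $Q$ a prescribed quadratic contraction of $\mathrm{Rm}$ --- together with the four-dimensional irreducible decomposition $\mathrm{Rm} = W + \tfrac{1}{2}E\odot g + \tfrac{1}{24}R\,g\odot g$. The scalar-curvature evolution $\partial_t R = \Delta R + 2|\mathrm{Ric}|^2$ and the volume-form evolution $\partial_t\,dv_g = -R\,dv_g$ are used freely throughout.

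For the $|E|^2$ equation, I first differentiate $E_{ij} = R_{ij} - \tfrac{R}{4}g_{ij}$ in time using Hamilton's Ricci evolution $\partial_t R_{ij} = \Delta R_{ij} + 2R_{ikjl}R^{kl} - 2R_{ik}R_j{}^k$ together with the scalar evolution, then take the squared norm, being careful that $\partial_t g^{ij} = 2R^{ij}$ contributes additional curvature terms when differentiating $|E|^2 = g^{ik}g^{jl}E_{ij}E_{kl}$. Expanding $R_{ikjl}R^{kl}$ via the four-dimensional decomposition produces a mixture of $W$-$E$, $E$-$E$, and $E$-$R$ contractions; invoking $g^{ij}E_{ij}=0$ to eliminate redundant traces, these should collapse to precisely $4\,WEE - 4\,\mathrm{tr}\,E^3 + \tfrac{2}{3}R|E|^2$. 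The heat-type piece $\Delta|E|^2 - 2|\nabla E|^2$ then comes from the standard Bochner identity for a covariant $2$-tensor.

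For the $\|W\|^2$ equation, I project Hamilton's Riemann evolution onto its Weyl component using the decomposition and form the norm of $W$ as an endomorphism of $\Lambda^2 T^*M$. The quadratic part splits into $W\cdot W$ pieces together with mixed contractions involving $E$ and $R$; the four-dimensional identity consolidating triple products of $W$ via the self-dual/anti-self-dual splitting $W = W^+ \oplus W^-$ produces the $36\det W$ contribution, while the cross-terms collapse to $WEE$ once the algebraic Bianchi identity and the tracelessness of $E$ are applied. This is the most delicate piece of bookkeeping; I would follow the derivation in \cite{CGZ} for precise coefficient tracking, with sanity checks against $(S^4,g_c)$, where both sides vanish, and against $(\mathbb{CP}^2,g_{FS})$, where the flow is stationary so the right-hand side must also vanish.

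For the $(R-\ol R)^2$ equation, apply the product rule to get $\partial_t(R-\ol R)^2 = 2(R-\ol R)(\partial_t R - \tfrac{d}{dt}\ol R)$ and absorb the heat part through $\Delta(R-\ol R)^2 = 2(R-\ol R)\Delta R + 2|\nabla R|^2$. The only new ingredient is $\tfrac{d}{dt}\ol R$, which follows from $\tfrac{d}{dt}\mathrm{vol}(g) = -\int R\,dv$ and $\tfrac{d}{dt}\int R\,dv = \int(2|\mathrm{Ric}|^2 - R^2)\,dv$, yielding $\tfrac{d}{dt}\ol R = 2\,\ol{|\mathrm{Ric}|^2} - \ol{R^2} + \ol R^2$; substituting $|\mathrm{Ric}|^2 = |E|^2 + \tfrac{R^2}{4}$ (valid only in dimension four) and collecting algebraic terms gives exactly the bracketed combination $4|E|^2 - 4\ol{|E|^2} + R^2 - 2\ol R^2 + \ol{R^2}$ multiplying $(R-\ol R)$. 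The main obstacle across the three derivations is the $\|W\|^2$ evolution, since it is the only one genuinely requiring the full four-dimensional algebraic structure of $\Lambda^2 = \Lambda^+ \oplus \Lambda^-$ rather than routine index manipulation.
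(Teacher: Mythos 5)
Your approach is essentially the one the paper relies on: the paper gives no proof of its own, citing \cite[Lemma 4.5]{CGZ} for \eqref{Wevo}--\eqref{Eevo} and calling the scalar evolution well known, and your outline (Hamilton's evolution of $\mathrm{Rm}$, the four-dimensional decomposition, the Bochner identity for the heat part, and deferring the coefficient bookkeeping for the Weyl term to \cite{CGZ}) is exactly the standard derivation behind that citation. Your treatment of \eqref{Revo} is complete and correct: $\frac{d}{dt}\ol{R}=2\,\ol{|\mathrm{Ric}|^2}-\ol{R^2}+\ol{R}^2$ combined with $|\mathrm{Ric}|^2=|E|^2+\tfrac{R^2}{4}$ reproduces the bracketed term verbatim. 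One concrete caveat: your proposed sanity check on $(\mathbb{CP}^2,g_{FS})$ rests on a false premise. That metric is Einstein with $R>0$, so it is \emph{not} stationary under the unnormalized Ricci flow but shrinks homothetically, $g(t)=(1-\tfrac{R_0}{2}t)g_0$; since $\|W\|^2$ scales like $\lambda^{-2}$ under $g\mapsto\lambda g$, the left-hand side of \eqref{Wevo} equals $R\|W\|^2\neq 0$ there, and indeed with $\nabla W=0$, $E=0$ the right-hand side is $36\det W^+=R\|W\|^2$ (eigenvalues $\tfrac{R}{6},-\tfrac{R}{12},-\tfrac{R}{12}$ of $W^+$). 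Applied as you state it, that check would wrongly reject the correct coefficient $36$; the derivation itself is unaffected.
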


\subsection{Control of the Yamabe constant under the Ricci flow}

We want to control the Yamabe constant in order to estimate the evolution of curvature quantities under the Ricci flow. By Proposition \ref{cons1} \eqref{Ybound}, we see that along the flow $Y(S^4,[g(t)])$ has a lower bound in terms of an upper bound on $\int\|W\|^2_{g(t)}\ dv_{g(t)}$. So we can achieve the desired control by showing that
\begin{align}
F_2(t)=\int\left(\|W\|^2_{g(t)}+\frac{1}{2}|E|^2_{g(t)}\right) \ dv_{g(t)}
\end{align}
is decreasing under the flow, provided that $\int\|W\|^2_{g(0)}\ dv_{g(0)}<\epsilon_0$, for $\epsilon_0>0$ sufficiently small.

\begin{Prop}\label{EWmono}
If $(S^4,g(t))$ is a Ricci flow starting from a positive Yamabe metric $(S^4,g_0)$ with $\int\|W\|^2_{g_0}\ dv_{g_0}<\epsilon_0=\frac{8}{25}\pi^2$, then $F_2(t)$ is monotonically decreasing under the flow.
\end{Prop}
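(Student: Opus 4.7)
The plan is to differentiate $F_2(t)$ along the flow and show that the resulting integrand is non-positive, using a smallness-of-initial-data continuity argument to propagate the bound. Using the evolution identities of Proposition \ref{evoform} together with the fact that $\partial_t\,dv = -R\,dv$ under the Ricci flow, and integrating the Laplacian terms to zero on the closed manifold $S^4$, I would obtain
\begin{align*}
\frac{d}{dt}F_2(t) = \int_{S^4}\bigl[-2\|\nabla W\|^2 - |\nabla E|^2 + 36\det W + 3\,WEE - 2\operatorname{tr}E^3 - R\|W\|^2 - \tfrac{1}{6}R|E|^2\bigr]\,dv.
\end{align*}
The four non-cubic terms are non-positive --- $R > 0$ is preserved along the flow starting from a positive Yamabe metric, via the maximum principle applied to $\partial_tR = \Delta R + 2|\mathrm{Ric}|^2$ --- so the question reduces to dominating the cubic contribution by these good terms.

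For the cubic terms, I would apply the standard pointwise bounds $|\det W|\leq c_1\|W\|^3$, $|WEE|\leq c_2\|W\||E|^2$, and $|\operatorname{tr}E^3|\leq c_3|E|^3$ to estimate $|36\det W + 3\,WEE - 2\operatorname{tr}E^3| \leq C_\ast\,u^3$, where $u := \|W\| + \alpha|E|$ for a well-chosen $\alpha > 0$. Then by H\"older, $\int u^3\,dv \leq \|u\|_{L^2}\|u\|_{L^4}^2$, and by the Yamabe--Sobolev inequality \eqref{ysob} applied to $u$,
\begin{align*}
Y(S^4,[g(t)])\,\|u\|_{L^4}^2 \leq 6\int\|\nabla u\|^2\,dv + \int R u^2\,dv.
\end{align*}
Kato's inequality (with a Cauchy--Schwarz loss from the cross term in $|\nabla u|^2$) controls $\int\|\nabla u\|^2\,dv$ by $\int\|\nabla W\|^2\,dv + \alpha^2\int|\nabla E|^2\,dv$ up to a harmless constant, and expanding $u^2$ with AM--GM on the cross term yields $\int Ru^2\,dv$ in terms of $\int R\|W\|^2\,dv$ and $\int R|E|^2\,dv$. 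Thus the bad cubic contribution is controlled by precisely the quantities appearing with negative sign in $\frac{d}{dt}F_2(t)$.

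The final step is absorption combined with a continuity argument. By Proposition \ref{cons1}\eqref{Ybound}, the bound $\int\|W\|^2\,dv < \epsilon_0$ (which is implied by $F_2(t) < \epsilon_0$) gives $Y(S^4,[g(t)]) \geq [24(16\pi^2-\epsilon_0)]^{1/2}$, close to the round Yamabe constant when $\epsilon_0$ is small; and $\|u\|_{L^2}^2 \leq C F_2(t) < C\epsilon_0$. Hence the prefactor $\|u\|_{L^2}/Y(S^4,[g(t)])$ multiplying the right-hand side of the H\"older--Sobolev estimate is as small as we wish, and for $\epsilon_0$ small enough the cubic terms are strictly absorbed, yielding $\frac{d}{dt}F_2(t) < 0$ whenever $F_2(t) < \epsilon_0$. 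Since $F_2(0) < \epsilon_0$, a standard continuity argument extends this to all $t\geq 0$. The main obstacle is not this robust structural argument but the careful bookkeeping of constants needed to match the explicit threshold $\epsilon_0 = \tfrac{8}{25}\pi^2$: the sharp choice of $\alpha$, the optimal pointwise constant for the mixed term $WEE$, and the loss in Kato plus Cauchy--Schwarz must be balanced to squeeze out the advertised value.
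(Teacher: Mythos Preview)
Your approach is essentially the same as the paper's: differentiate $F_2$, estimate the cubic reaction terms by H\"older against $L^4$ norms, invoke the Yamabe--Sobolev inequality \eqref{ysob}, and absorb using the lower bound on $Y$ from Proposition~\ref{cons1}\eqref{Ybound}. Two points are worth flagging.

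First, the paper does not bundle $\|W\|$ and $|E|$ into a single test function $u=\|W\|+\alpha|E|$. Instead it applies \eqref{ysob} separately to $\|W\|$ and to $|E|$, obtaining two independent inequalities with coefficients $-\tfrac{Y}{3}+2\sqrt{6}\|W\|_2$ in front of $\|W\|_4^2$ and $-\tfrac{Y}{6}+(2+\tfrac{\sqrt{6}}{3})\|W\|_2+2\|E\|_2$ in front of $\|E\|_4^2$. This avoids the Cauchy--Schwarz losses you anticipate from the cross terms in $|\nabla u|^2$ and $Ru^2$, and is precisely what makes the explicit threshold $\tfrac{16}{25}\pi^2$ for $F_2$ fall out cleanly. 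Your combined-$u$ route would work qualitatively but would likely give a worse constant.

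Second, there is a small gap in your closing continuity step. The hypothesis is $\int\|W\|^2_{g_0}\,dv_{g_0}<\epsilon_0$, \emph{not} $F_2(0)<\epsilon_0$; these are different since $F_2$ also contains $\tfrac12\int|E|^2$. The paper bridges this using Proposition~\ref{cons1}\eqref{EW}, which requires that $g_0$ be a Yamabe metric, to conclude $\tfrac12\int|E|^2_{g_0}\le\int\|W\|^2_{g_0}$ and hence $F_2(0)<2\epsilon_0=\tfrac{16}{25}\pi^2$. The absorption then needs to (and does) hold on the larger region $F_2<\tfrac{16}{25}\pi^2$, not merely $F_2<\epsilon_0$. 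Without invoking the Yamabe assumption here you have no a~priori control on $\int|E|^2_{g_0}$, and the continuity argument does not close.
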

\begin{proof}
From \eqref{Wevo} and \eqref{Eevo} we have (as in \cite[Corollary 4.7]{CGZ}),
\begin{align}
\frac{d}{dt}\int \|W\|^2\ dv_t&\leq -\int \left(2|\nabla \|W\||^2+ R\|W\|^2\right)\ dv_t\label{W2evo}
\\
&\quad+2\sqrt{6}\|W\|_2\|W\|_4^2+\frac{\sqrt{6}}{3}\|W\|_2\|E\|_4^2,\notag
\\
\frac{d}{dt}\int \frac{1}{2} |E|^2\ dv_t&\leq -\int \left(|\nabla E|^2+\frac{1}{6} R|E|^2\right)\ dv_t\label{E2evo}
\\
&\quad+2\|W\|_2\|E\|_4^2+2\|E\|_2\|E\|_4^2.\notag
\end{align}
Therefore by using the conformally invariant Sobolev inequality \eqref{ysob}, with $Y$ denoting $Y(S^4,[g(t)])$ below we have
\begin{align}
\frac{d}{dt}F_2(t)&\leq \left(-\frac{Y}{3}+2\sqrt{6}\|W\|_2\right)\|W\|_4^2\notag
\\
&\quad+\left(-\frac{Y}{6}+(2+\frac{\sqrt{6}}{3})\|W\|_2+2\|E\|_2\right)\|E\|_4^2.\notag
\end{align}
From this, by using Proposition \ref{cons1} \eqref{Ybound} we see that the coefficients of both $\|W\|_4^2$ and $\|E\|_4^2$ on the right will be negative if
\begin{align}
-\frac{1}{6}\left[24(16\pi^2-F_2)\right]^{\frac{1}{2}}+4\sqrt{ F_2}<0,\notag
\end{align}
or equivalently if $F_2<\frac{16}{25}\pi^2$. So the result on the smallness of $\int\|W\|^2_{g_0}\ dv_{g_0}$ follows by Proposition \ref{cons1} \eqref{EW}.
\end{proof}

We now consider the evolution of curvature under normalized Ricci flow. As in \cite{Hamilton}, if $(S^4,g(t))$ satisfies the Ricci flow equation, then by setting $\psi(t)=\text{Vol}(g(t))^{-1/2}$ and rescaling
\begin{align}
\tilde{g}(t)=\psi g(t),\quad \tilde{t}=\int_0^t\psi(s)\ ds,\notag
\end{align}
we obtain $(S^4,\tilde{g}(\tilde{t}))$ which solves the normalized Ricci flow equation
\begin{align}
\frac{\partial}{\partial\tilde{t}}\tilde{g}=-2\tilde{\mathrm{Ric}}+\frac{1}{2}\ol{\tilde{R}}\tilde{g}.\notag
\end{align}
In particular $\operatorname{Vol}(S^4,\tilde{g}(\tilde{t}))\equiv 1$ for all $\tilde{t}$ for which the flow is defined.  Note also that $F_2$ is scaling-invariant, so this quantity is the same whether at $g(t)$ or $\tilde{g}(\tilde{t})$. We can then reformulate Proposition \ref{EWmono} in terms of the normalized Ricci flow.

\begin{Cor}\label{EWmonolem}
If $(S^4,g(t))$ is a normalized Ricci flow starting from a unit volume positive Yamabe metric $(S^4,g_0)$ with $\int\|W\|_{g_0}^2\ dv_{g_0}<\epsilon_0=\frac{8}{25}\pi^2$, then $F_2(t)$ is exponentially decreasing under the flow.
\end{Cor}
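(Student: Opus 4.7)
\emph{Proof plan.} The idea is to combine the monotonicity from Proposition \ref{EWmono} with the Hölder inequality afforded by the fixed unit volume of the normalized flow.

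First I would observe that the differential inequality
\begin{equation*}
\frac{d F_2}{d\tilde t}\leq\left(-\frac{Y}{3}+2\sqrt{6}\|W\|_2\right)\|W\|_4^2+\left(-\frac{Y}{6}+\Bigl(2+\tfrac{\sqrt{6}}{3}\Bigr)\|W\|_2+2\|E\|_2\right)\|E\|_4^2
\end{equation*}
obtained in the proof of Proposition \ref{EWmono} continues to hold along the normalized flow (with all norms and $Y=Y(S^4,[\tilde g(\tilde t)])$ taken with respect to $\tilde g(\tilde t)$). This is because the normalization term $\tfrac12\ol{\tilde R}\tilde g$ by which the normalized flow differs from the standard flow acts at each instant as a spatially constant conformal rescaling of the metric, under which the pointwise integrands $\|W\|^2\,dv$ and $|E|^2\,dv$ are both invariant in dimension four; hence this term contributes nothing to $\tfrac{d}{d\tilde t}F_2$ nor to the evolutions of $\int\|W\|^2\,dv$ and $\int|E|^2\,dv$ that were combined to derive the inequality.

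Next, by Proposition \ref{cons1}\eqref{EW} the hypothesis $\int\|W\|_{g_0}^2\,dv_{g_0}<\epsilon_0=\tfrac{8}{25}\pi^2$ forces $F_2(0)<\tfrac{16}{25}\pi^2$, and Proposition \ref{EWmono} ensures $F_2(\tilde t)\leq F_2(0)$ along the flow. The coefficient analysis in the proof of Proposition \ref{EWmono} shows that under the strict bound $F_2(\tilde t)\leq F_2(0)<\tfrac{16}{25}\pi^2$ there is a constant $c_0>0$ depending only on $F_2(0)$ (hence on $g_0$) such that both coefficients on the right-hand side above are at most $-c_0$, giving
\begin{equation*}
\frac{dF_2}{d\tilde t}\leq -c_0\Bigl(\|W\|_4^2+\tfrac12\|E\|_4^2\Bigr).
\end{equation*}

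Finally, since $\operatorname{Vol}(\tilde g(\tilde t))\equiv 1$, the Cauchy--Schwarz inequality applied to the pointwise functions $\|W\|^2$ and $|E|^2$ against $1$ yields $\|W\|_2^2\leq\|W\|_4^2$ and $\|E\|_2^2\leq\|E\|_4^2$ at every time; substituting gives $\tfrac{d}{d\tilde t}F_2\leq -c_0 F_2$, which integrates to the exponential decay $F_2(\tilde t)\leq F_2(0)\,e^{-c_0\tilde t}$. The only delicate step is the first one — checking that passing from the standard to the normalized flow does not disturb the differential inequality of Proposition \ref{EWmono} — and this reduces cleanly to the scale-invariance of each of the two integrands defining $F_2$; everything else is short bookkeeping.
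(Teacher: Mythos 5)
Your argument is correct and is essentially the paper's own: the authors likewise combine the coefficient negativity from Proposition \ref{EWmono} with H\"older's inequality to get $\frac{d}{dt}F_2\leq -C\,F_2\,\vol(g(t))^{-1/2}$ and then pass to the normalized flow via the scale-invariance of $F_2$, while you simply perform the normalization step first and the H\"older/unit-volume step second. The only point worth making explicit in either version is the continuity (open--closed) argument ensuring $F_2(\tilde t)\leq F_2(0)<\frac{16}{25}\pi^2$ persists so that the coefficient bound $-c_0$ is uniform in time, but this is the same implicit bootstrap the paper uses.
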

\begin{proof}
From the proof of Proposition \ref{EWmono}, we know that under a Ricci flow starting from a positive Yamabe metric $(S^4,g_0)$ with $\int\|W\|_{g_0}^2\ dv_{g_0}<\frac{8}{25}\pi^2$ we have
\begin{align}
\frac{d}{dt}F_2(t)\leq-CF_2(t)\vol(g(t))^{-1/2},\notag
\end{align}
where the constant $C$ depends on a positive lower bound for $\frac{8}{25}\pi^2-\int\|W\|_{g_0}^2\ dv_{g_0}$. Passing to the normalized Ricci flow $(S^4,\tilde{g}(\tilde{t})$, we find from direct computation that
\begin{align}
\frac{d}{d\tilde{t}}\tilde{F_2}\leq -C\tilde{F_2}(\tilde{t}),\notag
\end{align}
with $C>0$ the same constant as before, which gives us the conclusion.
\end{proof}

Now that we have control of both $\|W\|_2$ and $\|E\|_2$, we also obtain a uniform Sobolev inequality along the normalized Ricci flow. However, \eqref{ysob} itself is enough for the purpose of obtaining monotonicity of integral curvature quantities along the flow. The result below is only needed afterwards when applying Moser's weak maximum principle to pass to $L^\infty$ control.

\begin{Prop}\label{usob}
If $(S^4,g(t))$ is a normalized Ricci flow starting from a unit volume positive Yamabe metric $(S^4,g_0)$ with $\int\|W\|_{g_0}^2\ dv_{g_0}<\epsilon_0=\frac{8}{25}\pi^2$, then there exists $C>0$ such that for every $u\in W^{1,2}(S^4,g(t))$,
\begin{align}
\left(\int |u|^4\ dv_t\right)^{1/2}\leq C\left(\int|\nabla u|^2+u^2\ dv_t\right).
\end{align}
\end{Prop}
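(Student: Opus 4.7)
The plan is to absorb the scalar-curvature term $\int R\,u^2\,dv_t$ on the right-hand side of the conformally invariant Sobolev inequality \eqref{ysob} into the form wanted, using the uniform control along the normalized Ricci flow of the Yamabe constant and of integral scalar-curvature quantities already established earlier in this section. First I would apply \eqref{ysob} to $(S^4,g(t))$ and decompose $R = \ol{R} + (R-\ol{R})$. By Cauchy--Schwarz,
\begin{align*}
\int R\,u^2\,dv_t \le \ol{R}\,\|u\|_2^2 + \|R-\ol{R}\|_2\,\|u\|_4^2,
\end{align*}
so \eqref{ysob} rearranges to
\begin{align*}
\bigl(Y(S^4,[g(t)]) - \|R-\ol{R}\|_2\bigr)\|u\|_4^2 \le 6\int|\nabla u|^2\,dv_t + \ol{R}\,\|u\|_2^2.
\end{align*}

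The main task then is to prove uniform-in-$t$ bounds on (i) $Y(S^4,[g(t)])$ from below, (ii) $\|R-\ol{R}\|_2$ from above (small enough to absorb), and (iii) $\ol{R}$ from above. For (i) and (ii), by Corollary \ref{EWmonolem} one has $F_2(t)\le F_2(0)<\epsilon_0=\tfrac{8}{25}\pi^2$ for all $t\ge 0$, so $\int\|W\|^2\,dv_t\le F_2(0)$ and Proposition \ref{cons1}(\ref{Ybound}) yields $Y(S^4,[g(t)])\ge [24(16\pi^2-F_2(0))]^{1/2}$, while Proposition \ref{R2boundp} combined with $\|E\|_2^2\le 2F_2(t)\le 2F_2(0)$ gives $\|R-\ol{R}\|_2^2\le 24F_2(0)$. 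The smallness of $\epsilon_0$ then guarantees that $Y(S^4,[g(t)])-\|R-\ol{R}\|_2\ge c>0$ independently of $t$. For (iii), I would use that $\vol(g(t))=1$ along the normalized flow to get $\ol{R}^2\le \|R\|_2^2$ by Cauchy--Schwarz, and the Chern--Gauss--Bonnet formula \eqref{CGB} bounds $\|R\|_2^2\le 24(16\pi^2+F_2(0))$.

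Combining these bounds gives the desired inequality $\|u\|_4^2 \le C\int(|\nabla u|^2+u^2)\,dv_t$ with $C$ depending only on $F_2(0)$, hence only on the initial metric. I do not anticipate a real obstacle here: every input is already available from the monotonicity of $F_2$ (Corollary \ref{EWmonolem}), the Yamabe-constant lower bound (Proposition \ref{cons1}), and the comparison with the Yamabe metric (Proposition \ref{R2boundp}). The only delicate point is arranging that the conformal Sobolev coefficient $Y(S^4,[g(t)])$ strictly dominates the scalar-curvature fluctuation $\|R-\ol{R}\|_2$ so that the $\|u\|_4^2$ term can be absorbed on the left, and this is precisely why $\epsilon_0$ must be chosen sufficiently small.
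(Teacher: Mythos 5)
Your argument is correct and follows essentially the same route as the paper: apply the conformally invariant Sobolev inequality \eqref{ysob}, split $R=\ol{R}+(R-\ol{R})$, absorb the fluctuation term via Cauchy--Schwarz using the lower bound on $Y(S^4,[g(t)])$ from Proposition \ref{cons1}\eqref{Ybound} together with the monotonicity of $F_2$, bound $\|R-\ol{R}\|_2$ by Proposition \ref{R2boundp}, and bound $\ol{R}$ by Chern--Gauss--Bonnet and unit volume. The only point the paper makes explicit that you leave implicit is that \eqref{ysob} is scale-invariant and hence passes unchanged to the normalized flow, which is immediate.
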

\begin{proof}
We start by rewriting the conformally invariant Sobolev inequality along the Ricci flow \eqref{ysob} as
\begin{align}
Y\|u\|_4^2\leq 6\int|\nabla u|^2+ (R-\overline{R}) u^2+\overline{R} u^2\ dv_t.\label{usobstep}
\end{align}
In particular this inequality is invariant with respect to scaling of $g(t)$, so it takes the same form on the corresponding time-slice of the normalized Ricci flow. So for the rest of this proof we will work exclusively with the normalized flow. 

By Corollary \ref{EWmonolem} and Proposition \ref{cons1} \eqref{Ybound} we have the lower bound $Y(S^4,g(t))\geq\left[24(16\pi^2-F_2(0))\right]^{\frac{1}{2}}$ along the flow, while by Proposition \ref{R2boundp} we have $\int (R-\ol{R})^2\ dv_t\leq 24F_2(0)$. Moreover $\ol{R}$ is uniformly bounded along the normalized flow, since
\begin{align}
\frac{1}{24}\ol{R}^2\leq\frac{1}{24}\int R^2\ dv_{\tilde{g}}\leq 16\pi^2+F_2(0).\notag
\end{align}
Thus we can conclude by absorbing the $\int (R-\overline{R}) u^2\ dv_t$ term on the left-hand side of \eqref{usobstep} as long as
\begin{align}
\left[24(16\pi^2-F_2(0))\right]^{\frac{1}{2}}-\left[24 F_2(0)\right]^{\frac{1}{2}}>0,\notag
\end{align}
which occurs if $F_2(0)<8\pi^2$. And this is certainly true since we assumed $F_2(0)<\frac{16}{25}\pi^2$.
\end{proof}

\subsection{Monotonicity of $W$, $E$, and $(R-\ol{R})$ under the normalized Ricci flow}

Now we will establish the monotonicity of the curvature quantities
\begin{align}
G_p(t)=\int \|W\|^p+\frac{1}{2}|E|^p+a |R-\ol{R}|^p\ dv_t
\end{align}
along the normalized Ricci flow, for $p=2+\delta$ and $\delta\in[0,\frac{2}{3})$, with $a>0$ a small constant to be established later. We first present the argument in the special case $\delta=0$ to illustrate our strategy before giving the general argument.

\subsection{The case $\delta=0$}

We start by collecting the evolution equations for $W$, $E$, and $|R-\ol{R}|$ that we will need.  For $W$ we retain \eqref{W2evo}, and next we rewrite \eqref{E2evo} (after some work and applying Proposition \ref{R2boundp}) as
\begin{align}
\frac{d}{dt}\int\frac{1}{2}|E|^2\ dv_t&\leq -\int\left(|\nabla E|^2+\frac{1}{6}R|E|^2\right)\ dv_t+2\|W\|_2\|E\|_4^2+2\|E\|_2\|E\|_4^2\label{Edelta2}
\\
&\leq  -\frac{Y}{12}\|E\|_4^2+\left(2\|W\|_2+(2+\sqrt{3})\|E\|_2\right)\|E\|_4^2\notag
\\
&\quad-\frac{1}{12}\ol{R}\int|E|^2\ dv_t.\notag
\end{align}
Note that the last term is negative because we assumed that $R_{g_0}>0$, which implies that $\ol{R}>0$ along the flow. Thus the strategy of our proof is to keep this negative term and use it to cancel the positive term $\ol{R}\int(R-\ol{R})^2\ dv_t$ which will arise from the result of the computation of $\frac{d}{dt}\int(R-\ol{R})^2\ dv_t$ below, again obtained after some manipulation with the help of the inequality in Proposition \ref{R2boundp}.

We rewrite $\frac{d}{dt}\int (R-\ol{R})^2\ dv_t$ as
\begin{align}
\frac{d}{dt}\int (R-\ol{R})^2\ dv_t&=-2\int|\nabla R|^2\ dv_t+4\int(R-\ol{R})|E|^2\ dv_t+\int\ol{R}(R-\ol{R})^2\ dv_t
\\
&\leq -\frac{Y}{3}\|R-\ol{R}\|^2_4+\frac{1}{3}\|R-\ol{R}\|_2\|R-\ol{R}\|_4^2+16\ol{R}\int|E|^2\ dv_t\notag
\\
&\quad+4\|R-\ol{R}\|_2\|E\|_4^2.\notag
\end{align}
Thus for any $a>0$,
\begin{align}
\frac{d}{dt}\int a(R-\ol{R})^2\ dv_t&\leq -\frac{Y}{3} a\|R-\ol{R}\|_4^2+\frac{2}{\sqrt{3}}a\|E\|_2\|R-\ol{R}\|_4^2+16a\ol{R}\int|E|^2\ dv_t\label{Rdelta2}
\\
&\quad+4a\|R-\ol{R}\|_2\|E\|_4^2.\notag
\end{align}
Adding together \eqref{W2evo}, \eqref{Edelta2}, and \eqref{Rdelta2}, we find that if $a\leq\frac{1}{192}$, then
\begin{align}
&\frac{d}{dt}\int \|W\|^2+\frac{1}{2}|E|^2+a(R-\ol{R})^2\ dv_t
\\
&\leq\left(-\frac{Y}{3}+2\sqrt{6}\|W\|_2\right)\|W\|_4^2\notag
\\
&\quad+\left(-\frac{Y}{12}+(2+\frac{\sqrt{6}}{3})\|W\|_2+2\|E\|_2+(\frac{1}{12}+4a)\|R-\ol{R}\|_2\right)\|E\|_4^2\notag
\\
&\quad+\left(-\frac{Y}{3}a+\frac{2}{\sqrt{3}}a\|E\|_2\right)\|R-\ol{R}\|_4^2.\notag
\end{align}
Hence we see that $G_2$ is monotonically decreasing along the Ricci flow if
\begin{align}
-\frac{Y}{12}+2\sqrt{6 G_2}<0,\notag
\end{align}
or equivalently by the lower bound on $Y=Y(S^4,[g(t)])$ from Proposition \ref{cons1} \eqref{Ybound}, if
\begin{align}
F_2(0)=G_2(0)<\frac{16}{145}\pi^2.\notag
\end{align}

\subsection{The case $\delta>0$}

We use the same ideas, but now work with the quantities
\begin{align}
F_{2+\delta}(t)&=\int\|W\|^{2+\delta}+\frac{1}{2}|E|^{2+\delta}\ dv_t,
\\
G_{2+\delta}(t)&=\int \|W\|^{2+\delta}+\frac{1}{2}|E|^{2+\delta}+a|R-\ol{R}|^{2+\delta}\ dv_t,
\end{align}
for $\delta>0$ and $a$ to be specified later (recall from above that for $\delta=0$ we required $a\leq\frac{1}{192}$).

We first have the corresponding evolution formulas derived from Proposition \ref{evoform},
\begin{align}
\frac{\partial}{\partial t}||W||^{2+\delta}&\leq \Delta||W||^{2+\delta}-\frac{4\delta+4}{\delta+2}|\nabla||W||^{1+\delta/2}|^2
\\
&\quad+\left(1+\frac{\delta}{2}\right)||W||^{\delta}\left(36\det W^++36\det W^-+WEE\right)\notag
\\
\frac{\partial}{\partial t}|E|^{2+\delta}&\leq\Delta |E|^{2+\delta}-\frac{4\delta+4}{2+\delta}|\nabla|E|^{1+\delta/2}|^2
\\
&\quad+\left(1+\frac{\delta}{2}\right)|E|^{\delta}\left(4WEE-4\text{tr}E^3+\frac{2}{3}R|E|^2\right)\notag
\\
\frac{\partial}{\partial t}|R-\ol{R}|^{2+\delta}&\leq \Delta |R-\ol{R}|^{2+\delta}-\frac{4\delta+4}{\delta+2}|\nabla|R-\ol{R}|^{1+\delta/2}|^2\label{R2devopoint}
\\
&\quad+\left(2+\delta\right)|R-\ol{R}|^{1+\delta}\left(2|E|^2-2\ol{|E|^2}+\frac{1}{2}R^2-\ol{R}^2+\frac{1}{2}\ol{R^2}\right).\notag
\end{align}
Then we can compute to find that when $0\leq \delta<\sqrt{2}-1$,
\begin{align}
&\frac{d}{dt}\int\|W\|^{2+\delta}+\frac{1}{2}|E|^{2+\delta}\ dv_t\label{WEdelta}
\\
&\leq \left(-\frac{2(1+\delta)}{3(2+\delta)}Y+\left(1+\frac{\delta}{2}\right)\left(2\sqrt{6}+\frac{\sqrt{6}(1+\delta)}{3(3+\delta)}\right)\|W\|_2\right)\|W\|_{4+2\delta}^{2+\delta}\notag
\\
&+\left(-\frac{1+\delta}{6(2+\delta)}Y+2\left(1+\frac{\delta}{2}\right)\|W\|_2+\left(1+\frac{\delta}{2}\right)\left(2+\frac{\sqrt{6}}{3}\cdot\frac{2}{3+\delta}\right)\|E\|_2\right)\|E\|^{2+\delta}_{4+2\delta}.\notag
\end{align}
From this we see that $\frac{d}{dt}F_{2+\delta}(t)<0$ if $F_2(0)<\epsilon_0=\frac{16}{25}\pi^2$, so that $F_2(t)$ is nonincreasing, along with
\begin{align}
-\frac{1+\delta}{6(2+\delta)}Y+\left(1+\frac{\delta}{2}\right)\left(2\sqrt{2}+\frac{4}{\sqrt{3}(3+\delta)}\right)\sqrt{F_2(0)}<0,\notag
\end{align}
which, using Proposition \ref{cons1} \eqref{Ybound} is true whenever $F_2(0)$ is sufficiently small.
Indeed, since we require $\delta<\sqrt{2}-1$
the above inequality is true for instance whenever $F_2(0)<\frac{1}{5}\pi^2$.
We will in fact need a slightly more general inequality which we record below: if $F_2(0)<\frac{1}{5}\pi^2$ and $\delta<\sqrt{2}-1$, then
\begin{align}
\frac{d}{dt}F_{2+\delta}(t)+c_1 F_{4+2\delta}(t)^{\frac{1}{2}}&\leq- c_2\int R\|W\|^{2+\delta}\ dv_t-c_3\int R|E|^{2+\delta}\ dv_t
\\
&<0,\notag
\end{align}
where $c_1,c_2,c_3>0$ are constants, $c_1,c_2$ with positive lower bounds and $c_3$ tending to zero as $\delta\rightarrow\sqrt{2}-1$. 

We now need an estimate on the evolution of $\int |R-\ol{R}|^{2+\delta}\ dv_t$. By computing from \eqref{R2devopoint}, making use of the conformally invariant Sobolev inequality  \eqref{ysob}, and from Proposition \ref{R2boundp} that $\|R-\ol{R}\|_2\leq \sqrt{12}\|E\|_2$, we obtain

\begin{align}
\frac{d}{dt}\int|R-\ol{R}|^{2+\delta}\ dv_t&\leq \left(-\frac{2(1+\delta)}{3(2+\delta)} Y+\sqrt{12}\frac{(2+\delta)(5+3\delta)}{3+\delta}\|E\|_2\right)\|R-\ol{R}\|_{4+2\delta}^{2+\delta}\label{R2devo}
\\
&\quad+\left(\frac{2(1+\delta)}{3(2+\delta)}-1\right)\int R|R-\ol{R}|^{2+\delta}\ dv_t\notag
\\
&\quad+\frac{4(2+\delta)}{3+\delta}\|E\|_2\|E\|_{4+2\delta}^{2+\delta}\notag
\\
&\quad+(2+\delta)\ol{R}\int|R-\ol{R}|^{2+\delta}\ dv_t.\notag
\end{align}
Notice that the coefficient of the $\int R|R-\ol{R}|^{2+\delta}\ dv_t$ term is always negative. We can estimate the last term on the right by computing that
\begin{align}
\int|R-\ol{R}|^{2+\delta}\ dv_t&\leq \left(\int |R-\ol{R}|^2\ dv_t\right)^{\frac{2+\delta}{2+2\delta}}\left(\int|R-\ol{R}|^{4+2\delta}\ dv_t\right)^{\frac{\delta}{2+2\delta}}
\\
&\leq \eta^{-\frac{2+2\delta}{2}}\frac{24}{2+2\delta}\left(\int|E|^{2+\delta}\ dv_t\right)\notag
\\
&\quad+\eta^{\frac{2+2\delta}{2\delta}}\frac{2\delta\sqrt{12}}{2+2\delta}\vol(g)^{\frac{1}{2}}\left(\int|R-\ol{R}|^{4+2\delta}\ dv_t\right)^{\frac{1}{2}},\notag
\end{align}
for all $\eta>0$. Then, combining 
our estimates for
\eqref{WEdelta} and \eqref{R2devo} together, we find that
\begin{align}
&\frac{d}{dt}\int\|W\|^{2+\delta}+\frac{1}{2}|E|^{2+\delta}+a|R-\ol{R}|^{2+\delta}\ dv_t\label{G2devo}
\\
&\leq \left(-\frac{2(1+\delta)}{3(2+\delta)}Y+\left(1+\frac{\delta}{2}\right)\left(2\sqrt{6}+\frac{\sqrt{6}(1+\delta)}{3(3+\delta)}\right)\|W\|_2\right)\|W\|_{4+2\delta}^{2+\delta}\notag
\\
&+\left(-\frac{1+\delta}{6(2+\delta)}Y+2\left(1+\frac{\delta}{2}\right)\|W\|_2+\left(1+\frac{\delta}{2}\right)\left(2+\frac{\sqrt{6}}{3}\cdot\frac{2}{3+\delta}\right)\|E\|_2\right.\notag
\\
&\qquad\left.+\frac{\sqrt{12}}{2}\left|\frac{2(1+\delta)}{2+\delta} \cdot \frac{1}{6}+\frac{(2+\delta)}{3}-1\right|\|E\|_2+\frac{4a(2+\delta)}{3+\delta}\|E\|_2\right)\|E\|^{2+\delta}_{4+2\delta}.\notag
\\
&+a\left(-\frac{2(1+\delta)}{3(2+\delta)} Y+\sqrt{12}\frac{(2+\delta)(5+3\delta)}{3+\delta}\|E\|_2\right.\notag
\\
&\qquad\left.+\eta^{\frac{2+2\delta}{2\delta}}\frac{2\delta(2+\delta)\sqrt{12}}{2+2\delta}\vol(g)^{\frac{1}{2}}\ol{R}\right)\|R-\ol{R}\|_{4+2\delta}^{2+\delta}\notag
\\
&+\frac{1}{2}\left(\frac{2(1+\delta)}{2+\delta} \cdot \frac{1}{6}+\frac{(2+\delta)}{3}-1+2a\eta^{-\frac{2+2\delta}{2}}\frac{24(2+\delta)}{2+2\delta}\right)\ol{R}\int|E|^{2+\delta}\ dv_t.\notag
\end{align}
Above, note that the $\vol(g)^{\frac{1}{2}}\ol{R}$ term can be bounded from above in terms of $\|E\|_2$ by using \eqref{CGB}, since
\begin{align}
\ol{R}^2\leq \ol{R^2}\leq 24\vol(g)^{-1}\left(16\pi^2+\frac{1}{2}\|E\|_2^2\right).\notag
\end{align}
Therefore for fixed $\delta>0$ small, we can guarantee $\frac{d}{dt}\int\|W\|^{2+\delta}+\frac{1}{2}|E|^{2+\delta}+a|R-\ol{R}|^{2+\delta}\ dv_t<0$ as follows: in \eqref{G2devo} take $\eta>0$ sufficiently small so that the $\|R-\ol{R}\|_{4+2\delta}^{2+\delta}$ coefficient is negative for $G_2(0)$ sufficiently small, and then choose $a>0$ sufficiently small so that the $\ol{R}\int|E|^{2+\delta}\ dv_t$ coefficient is negative.  We can then, if needed, further restrict $G_2(0)$ to ensure that the remaining $\|W\|_{4+2\delta}^{2+\delta}$ and the $\|E\|_{4+2\delta}^{2+\delta}$ terms also have negative coefficients.

For example with some very rough estimates if we restrict $\delta\in[0,\frac{1}{3}]$, then taking $\eta=\frac{1}{100}$ and $a=10^{-6}$ will allow us to conclude that $\frac{d}{dt} G_{2+\delta}(t)<0$ whenever $F_2(0)<\frac{1}{1000}\pi^2$. For such a choice we in fact have that
\begin{align}
\frac{d}{dt}G_{2+\delta}(t)+C G_{4+2\delta}(t)^{\frac{1}{2}}\leq 0,\label{ddtineq}
\end{align}
for some $C>0$. We summarize this result below after converting to the normalized Ricci flow.

\begin{Prop}\label{final}
If $(S^4,g(t))$ is a normalized Ricci flow starting from a unit volume positive Yamabe metric $(S^4,g_0)$ with $\int\|W\|^2_{g_0}\ dv_{g_0}<\frac{1}{2000}\pi^2$, and we define
\begin{align}
G_p(t)=\int \|W\|^p+\frac{1}{2}|E|^p+\frac{1}{10^6}|R-\ol{R}|^p\ dv_t,
\end{align}
then for $p=2+\delta$ and any $\delta\in[0,\frac{1}{3}]$ there exists $C>0$ such that
\begin{align}
\frac{d}{dt}G_{2+\delta}(t)\leq -C G_{2+\delta}(t).
\end{align}
In particular, $G_{2+\delta}(t)$ is exponentially decreasing under the normalized Ricci flow (as long as it exists). 
\end{Prop}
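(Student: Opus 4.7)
The plan is to deduce Proposition~\ref{final} directly from the differential inequality \eqref{ddtineq} already established for the unnormalized Ricci flow, by combining it with a H\"{o}lder interpolation and then passing to the normalized flow via the standard rescaling, paralleling the argument used in Corollary~\ref{EWmonolem}.

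First, I would apply H\"{o}lder's inequality with the conjugate exponent pair $(2,2)$ to each of the three summands defining $G_{2+\delta}$ separately, obtaining a universal constant $C_1>0$ (depending only on the fixed coefficient weights $1,\tfrac{1}{2},10^{-6}$) such that
\begin{equation*}
G_{2+\delta}(t) \;\leq\; C_1\,\vol(g(t))^{1/2}\,G_{4+2\delta}(t)^{1/2}.
\end{equation*}
Substituting this into \eqref{ddtineq} yields along the unnormalized Ricci flow the ODE-type bound
\begin{equation*}
\frac{d}{dt}G_{2+\delta}(t) \;\leq\; -C_2\,\vol(g(t))^{-1/2}\,G_{2+\delta}(t)
\end{equation*}
for some $C_2>0$ depending only on $C_1$ and the constant in \eqref{ddtineq}.

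Next I would pass to the normalized Ricci flow via the standard rescaling $\tilde{g}(\tilde{t})=\vol(g(t))^{-1/2}g(t)$, $\tilde{t}=\int_0^t\vol(g(s))^{-1/2}\,ds$. Tracking how each of the three integrands in $G_{2+\delta}$ transforms under a constant conformal rescaling (each factor of $\|W\|^{2+\delta}$, $|E|^{2+\delta}$, or $|R-\ol{R}|^{2+\delta}$ scales like $\vol^{-(2+\delta)/2}$ while $dv$ scales like $\vol$), I obtain the scaling identity $\tilde{G}_{2+\delta}(\tilde{t})=\vol(g(t))^{\delta/2}G_{2+\delta}(t)$. Combining this with $\frac{d}{dt}\vol(g(t))=-\ol{R}\,\vol(g(t))$ and the previous differential inequality, a direct computation yields
\begin{equation*}
\frac{d}{d\tilde{t}}\tilde{G}_{2+\delta}(\tilde{t}) \;\leq\; -\tfrac{\delta}{2}\,\ol{\tilde{R}}\,\tilde{G}_{2+\delta}(\tilde{t})\;-\;C_2\,\tilde{G}_{2+\delta}(\tilde{t}).
\end{equation*}
Since $Y(S^4,[g_0])>0$ together with the strong maximum principle applied to the evolution of scalar curvature imply $R>0$, hence $\ol{\tilde{R}}>0$, along the flow, the first term on the right is non-positive. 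Therefore $\frac{d}{d\tilde{t}}\tilde{G}_{2+\delta}\leq -C_2\,\tilde{G}_{2+\delta}$, and Gronwall's inequality gives the claimed exponential decay.

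The main point to track is the bookkeeping in this second step: unlike the scale-invariant $F_2$ of Corollary~\ref{EWmonolem}, for $\delta>0$ the quantity $G_{2+\delta}$ picks up a nontrivial volume weight under the rescaling, and the extra term produced by differentiating this weight is precisely what is absorbed by the positivity of $\ol{R}$. Apart from this minor subtlety I do not anticipate any substantive obstacle, since \eqref{ddtineq} from the preceding subsection is already of the exact form needed for the H\"{o}lder interpolation, and the rescaling calculation itself is routine.
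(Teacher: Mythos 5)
Your proposal is correct and follows essentially the same route as the paper: the paper likewise obtains Proposition \ref{final} by combining the already-established inequality \eqref{ddtineq} with the H\"older bound $G_{2+\delta}\leq C\,\vol(g)^{1/2}G_{4+2\delta}^{1/2}$ and then converting to the normalized flow, where the volume weight $\tilde{G}_{2+\delta}=\vol(g(t))^{\delta/2}G_{2+\delta}(t)$ contributes the extra term $-\tfrac{\delta}{2}\ol{\tilde{R}}\tilde{G}_{2+\delta}$ of favorable sign since $\ol{R}>0$ is preserved. The only blemish is the parenthetical describing the scaling (the exponents there should read $\vol^{(2+\delta)/2}$ for the curvature factors and $\vol^{-1}$ for $dv$), but your stated identity $\tilde{G}_{2+\delta}=\vol^{\delta/2}G_{2+\delta}$ and the ensuing computation are correct.
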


\begin{Rem}
Although we have only established that $G_{2+\delta}$ is monotonically decreasing in Proposition \ref{final} above when $\int\|W\|^2\ dv_{g_0}$ is sufficiently small, standard Moser iteration arguments using Proposition \ref{usob} (similar to those used to derive Proposition \ref{DPW} below) will yield decay of $G_{2+\delta}$ --- namely, that there exist constants $C_1,C_2>0$ such that for all $t>0$ whenever the normalized flow exists,
\begin{align}
G_{2+\delta}(t)^{\frac{1}{2+\delta}}\leq C_1 t^{-\frac{1}{2+\delta}} e^{-C_2 t} G_2(0)^{\frac{1}{2}}.\label{2deltadecay}
\end{align}
In the paper of Gursky \cite{Gursky}, he actually established that there when $\int\|W\|^2\ dv_{g_0}$ is sufficiently small, \eqref{2deltadecay} holds for $\delta=1$ on the interval $[0,t_1]$, where $t_1>0$ is bounded from below, ie.
\begin{align}
G_3(t)^{\frac{1}{3}}\lesssim\frac{1}{t^{1/3}} G_2(0)^{\frac{1}{2}}.\label{Gurskydecay}
\end{align}
It is also possible to also establish the estimate \eqref{Gurskydecay} using \eqref{2deltadecay}, although one does not know whether $G_3(t)$ is also monotonically decreasing.
\end{Rem}

We have therefore proven the monotonicity and decay assertions in the main result of this Section, Theorem \ref{3main}. To finish this section we describe below how the remaining assertions on pointwise decay and convergence along the normalized Ricci flow can be obtained by standard Moser iteration arguments.

\subsection{Bounds in terms of the initial $L^2$-pinching}\label{linftysec}

We outline the argument for establishing $L^\infty$ bounds on $\|W\|$, $|E|$, and $|R-\ol{R}|$ along the normalized flow. First we recall from \eqref{ddtineq} that
\begin{align}
\frac{d}{dt}G_{2}(t)+C G_{4}(t)^{\frac{1}{2}}\leq 0,\notag
\intertext{and in fact for all $\delta\in[0,\frac{1}{3}]$,}
\frac{d}{dt}G_{2+\delta}(t)+C G_{4+\delta}(t)^{\frac{1}{2}}\leq 0.\notag
\end{align}
Next, for $0<\tau<\tau'<T$, multiply consider the piecewise continuous function $\psi:[0,T]\rightarrow[0,1]$,
\begin{align}
\psi(t)=
\begin{cases}
0,\quad&0\leq t\leq\tau
\\
\frac{t-\tau}{\tau'-\tau},\quad&\tau\leq t\leq\tau'
\\
1,\quad&\tau'\leq t.
\end{cases}\notag
\end{align}
Multiplying $\psi$ against the differential inequalities above and integrating, we get for $t_0\geq\tau'$ that
\begin{align}
G_2(t_0)+C\int_{\tau'}^{t_0}G_4(s)^{\frac{1}{2}}\ ds&\leq \frac{1}{\tau'-\tau}\int_\tau^{t_0}G_2(s)\ ds\label{p2}
\\
G_{2+\delta}(t_0)+C\int_{\tau'}^{t_0}G_{4+2\delta}(s)^{\frac{1}{2}}\ ds&\leq \frac{1}{\tau'-\tau}\int_\tau^{t_0}G_{2+\delta}(s)\ ds.\label{p3}
\end{align}
To continue from this point we no longer need to worry about the precise ratios between $\|W\|$, $|E|$, and $|R-\ol{R}|$ as in the definitions of $G_{2+\delta}$, so we let $K= |E|+ ||W||+|R-\ol{R}|$, and estimate

\begin{align}
\int K^{2+\delta}\ dv_t&\leq \frac{C}{t}\left(\sup_{s\in[\frac{t}{2},t]}\left(\int K^2\ dv_s\right)^{1-\frac{\delta}{2}}\right)\int_{\frac{t}{2}}^t\left(\int K^{4}\ dv_s\right)^{\frac{\delta}{2}}\ ds\label{penultdec}
\\
&\leq \frac{C}{t}\left(\sup_{s\in[\frac{t}{2},t]}\left(\int K^2\ dv_s\right)^{1-\frac{\delta}{2}}\right)\left(\int_{\frac{t}{2}}^t\left(\int K^{4}\ dv_s\right)^{\frac{1}{2}}\ ds\right)^\delta t^{1-\delta}\notag
\\
&\leq \frac{C}{t^{2\delta}}\left(\sup_{s\in[\frac{t}{2},t]}\left(\int K^2\ dv_s\right)^{1-\frac{\delta}{2}}\right)\left(\int_{\frac{t}{2}}^t\int K^2\ dv_s\ ds\right)^\delta\notag
\\
&\leq \frac{C}{t^\delta}||K||_2^{2+\delta}(0),\notag
\end{align}
in the last line using the monotonic decreasing property of $G_2$.  Hence
\begin{align}
\|K\|_{2+\delta}\leq C t^{-\frac{\delta}{2+\delta}}\|K\|_2(0).\label{Kdecay}
\end{align}

\begin{Rem}
Actually, we have a stronger estimate than above, since we actually have that $\|K\|_{2}^2(t)\leq \|K\|_2^2(0) e^{-Ct}$. So plugging this in to our estimation of \eqref{penultdec} we find
\begin{align}
\|K\|_{2+\delta}\leq \frac{C}{t^{\delta/(2+\delta)}}e^{-Ct}\|K\|_2(0),\label{qcontrol}
\end{align}
where here the constant $C$ may be different in different lines.
\end{Rem}


To complete our estimates, we will need Moser's weak maximum principle, and in partiucular, the following version:

\begin{Prop}[{\cite[Theorem 4]{Yang}}]\label{DPW}
	Let $f,b$ be smooth nonnegative functions satisfying on $M\times[0,T]$,
	\begin{align}
	\frac{\partial}{\partial t} f\leq\Delta f+bf,\notag
	\end{align}
	where $\Delta$ is the Laplace-Beltrami operator of the metric $g_t$, and suppose $\frac{\partial}{\partial t} dv_{g_t}=h_t dv_{g_t}$. Let $A,B>0$ be such that
	\begin{align}
	\|u\|_{\frac{2n}{n-2}}^2\leq A\|\nabla u\|_2^2+B\|u\|_2^2,\notag
	\end{align}
	for all $u\in C^\infty(M)$ and for all $t\in[0,T]$, and assume that for some $q>n/2$,
	\begin{align}
	\max_{0\leq t\leq T}(\|b\|_{q}+\|h_t\|_q)\leq\beta.\notag
	\end{align}
	Then given $p_0>1$, there exists a constant $C=C(n,q,p_0)$ such that for all $x\in M$ and $t\in(0,T]$,
	\begin{align}
	|f(x,t)|\leq C A^{\frac{n}{2p_0}}\left[\frac{B}{A}+A^{\frac{n}{2q-n}}\beta ^{\frac{2q}{2q-n}} +\frac{1}{t}\right]^{\frac{n+2}{2p_0}}\left(\int_0^t\int f^{p_0}\ dv_t\ dt\right)^{\frac{1}{p_0}}.\label{Yangestimate}
	\end{align}
\end{Prop}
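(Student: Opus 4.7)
The plan is to prove this via a standard parabolic Moser iteration, adapted to the evolving metric by carrying along the volume-form derivative $h_t$. The structure mirrors the classical argument: derive a recursive $L^p \to L^{p\nu}$ energy inequality with $\nu = n/(n-2)$, iterate it over exponents $p_k = p_0 \nu^k$ with shrinking time intervals, and collapse the telescoping product of constants to the claimed bound.

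First I would multiply the differential inequality $\partial_t f \leq \Delta f + bf$ by $p f^{p-1}\varphi(t)^2$ for a time cutoff $\varphi$, integrate over $M$, and use the identity $\frac{d}{dt}\int f^p \varphi^2\, dv_t = \int [p f^{p-1}\partial_t f + f^p h_t]\varphi^2\, dv_t + 2\int f^p \varphi \varphi'\, dv_t$ together with integration by parts to arrive at an energy inequality of the form
\[
\frac{d}{dt}\int f^p \varphi^2\, dv_t + \frac{4(p-1)}{p}\int |\nabla f^{p/2}|^2 \varphi^2\, dv_t \leq \int (pb + h_t) f^p \varphi^2\, dv_t + 2\int f^p \varphi \varphi'\, dv_t.
\]
For the $bf^p$ and $h_t f^p$ terms I would apply H\"older with exponent $q > n/2$ to get $\|b\|_q \|f^{p/2}\|_{2q/(q-1)}^2$ and then interpolate $L^{2q/(q-1)}$ between $L^2$ and $L^{2n/(n-2)}$. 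Since $q > n/2$, the interpolation exponent sits strictly between $0$ and $1$ on the $L^{2n/(n-2)}$ side, so the Sobolev inequality $\|u\|_{2n/(n-2)}^2 \leq A\|\nabla u\|_2^2 + B\|u\|_2^2$ combined with Young's inequality allows one to absorb a small multiple of $\|\nabla f^{p/2}\|_2^2$ into the gradient term, picking up an extra $L^2$ contribution with coefficient precisely of the form $\frac{B}{A} + A^{n/(2q-n)}\beta^{2q/(2q-n)}$.

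Integrating in time from $\tau$ to $t_1$ with $\varphi$ rising linearly from $0$ to $1$ on $[\tau,\tau']$, one obtains the key recursive inequality
\[
\sup_{s \in [\tau', t_1]} \|f\|_p^p(s) + \int_{\tau'}^{t_1} \|f\|_{p\nu}^p\, ds \leq C\, p^{\gamma}\, \Lambda(\tau'-\tau) \int_{\tau}^{t_1} \|f\|_p^p\, ds,
\]
where $\Lambda(\sigma) = \frac{B}{A} + A^{n/(2q-n)}\beta^{2q/(2q-n)} + \sigma^{-1}$ and $\gamma = \gamma(n,q)$. Iterating at exponents $p_k = p_0 \nu^k$ over nested time intervals $[\tau_k, t]$ with $\tau_{k+1} - \tau_k = 2^{-k-2} t$, the $\sigma^{-1}$ contribution at stage $k$ is $\sim 2^k/t$. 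Taking the $1/p_k$ power at each step and multiplying, one arrives at
\[
\|f(\cdot, t)\|_\infty \leq C \prod_{k \geq 0} \bigl(p_k^{\gamma}\, \Lambda_k\bigr)^{1/p_k} \cdot A^{n/(2p_0)} \Bigl(\int_0^t \int f^{p_0}\, dv_s\, ds\Bigr)^{1/p_0},
\]
where the $A^{n/(2p_0)}$ factor arises from unwinding the Sobolev constant in the final step. Geometric convergence of $\sum 1/p_k$ and $\sum k/p_k$ makes the product finite, and the total power of $\Lambda$ collapses to $\sum_k 1/p_k = (n+2)/(2p_0)$ after using $\nu/(\nu - 1) = n/2$.

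The main obstacle, in my view, is bookkeeping rather than any individual step: one must carefully track how the powers of $A$, $B$, $\beta$, and $t^{-1}$ propagate through the iteration so as to collapse to exactly the stated exponents $n/(2p_0)$ on $A$ and $(n+2)/(2p_0)$ on the bracketed quantity. The only genuinely new feature compared to the static case is the $h_t$ term, but this enters additively with the same structural role as $b$, so the hypothesis $\max_t(\|b\|_q + \|h_t\|_q) \leq \beta$ is precisely what is needed for the absorption step to close uniformly in time.
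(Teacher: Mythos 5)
The paper does not prove this proposition; it is quoted verbatim from the cited source (Yang, Theorem 4), whose proof is indeed the parabolic Moser iteration you describe. Your reconstruction gets the right overall architecture, and your treatment of the zeroth-order terms is correct: H\"older against $\|b\|_q$, interpolation of $L^{2q/(q-1)}$ between $L^2$ and $L^{2n/(n-2)}$ with exponent $\theta=n/(2q)<1$, and Young's inequality do produce exactly the coefficient $A^{n/(2q-n)}\beta^{2q/(2q-n)}$ after absorbing $\epsilon\|\nabla f^{p/2}\|_2^2$ into the gradient term.

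There is, however, one concrete flaw in the iteration scheme as written. You iterate at exponents $p_k=p_0\nu^k$ with $\nu=n/(n-2)$ and assert that $\sum_k 1/p_k=(n+2)/(2p_0)$ ``after using $\nu/(\nu-1)=n/2$''; but that identity gives $\sum_k 1/p_k = n/(2p_0)$, not $(n+2)/(2p_0)$, so the stated power of the bracket cannot be recovered this way. Worse, the recursion does not close as written: your stage-$k$ output is $\int_{\tau'}^{t_1}\|f\|_{p\nu}^{p}\,ds=\int(\int f^{p\nu}\,dv)^{1/\nu}ds$, which does not control the input $\int\!\!\int f^{p\nu}\,dv\,ds$ needed at stage $k+1$. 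The missing step is to interpolate the two pieces you already have on the left-hand side of your energy inequality --- the $\sup_{s}\|f\|_p^p$ bound and the time-integrated Sobolev term --- via H\"older in space,
\begin{align}
\int\!\!\int f^{p(1+\frac{2}{n})}\,dv\,ds\;\leq\;\Bigl(\sup_s\|f\|_p^p\Bigr)^{\frac{2}{n}}\int\Bigl(\int f^{\frac{pn}{n-2}}\,dv\Bigr)^{\frac{n-2}{n}}ds,\notag
\end{align}
so that the true iteration ratio is the parabolic exponent $\nu'=1+\tfrac{2}{n}$. With $p_k=p_0(1+\tfrac{2}{n})^k$ one gets $\sum_k 1/p_k=\tfrac{1}{p_0}\cdot\tfrac{\nu'}{\nu'-1}=\tfrac{n+2}{2p_0}$, which is exactly the exponent in \eqref{Yangestimate}, and the $A^{n/(2p_0)}$ prefactor then falls out of tracking the Sobolev constant through the same product. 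This is a repairable bookkeeping/closure issue rather than a wrong strategy, but as stated the induction step does not go through.
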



First, we need to check that the normalized flow actually exists for all positive times. To see this, let $f=|\Rm|$. Under the Ricci flow we have $\frac{\partial}{\partial t}|\Rm|\leq \Delta|\Rm|+C|\Rm|^2$, so that this inequality also holds under the normalized Ricci flow, and moreover we have bounds on $\|\Rm\|_{2+\delta}$ for all positive times (if the flow exists) along the normalized flow by \eqref{qcontrol}. Since we also have the uniform Sobolev inequality along the normalized flow by Proposition \ref{usob}, then all of this information together substituted into Proposition \ref{DPW} shows that $|\Rm|$ is bounded away from infinity for all positive times along the normalized Ricci flow. Hence the normalized Ricci flow indeed exists for all positive times.




Next, we study the decay of $K$.
By \eqref{Wevo}, \eqref{Eevo}, \eqref{Revo}, we have
\begin{align}
\frac{\partial}{\partial t}K\leq\Delta K+CK^2+C\ol{R}K+\|E\|_2^2\label{Pineq}
\end{align}
But recall that $\overline{R}$ is bounded along the normalized flow so $C\ol{R}K\leq CK$. Recall also that $\|E\|_2^2\leq \|G\|_2^2(0) e^{-Ct}$. Therefore if we set
\begin{align}
\hat{K}=K+C^{-1}\|K\|_2^2(0) e^{-Ct},\notag
\end{align}
then we have
\begin{align}
\frac{\partial}{\partial t}\hat{K} \leq\Delta \hat{K}+C\hat{K}^2+C\hat{K}.\label{hateq}
\end{align}



Starting from \eqref{hateq}, we now estimate
\begin{align}
\|\hat{K}\|_{2+\delta}(t)\leq C'\|\hat{K}\|_{2+\delta}(0) e^{-Ct},\label{decKhat}
\end{align}
under the hypotheses of Proposition \ref{final}.
So we can apply Proposition \ref{DPW} to $\hat{K}$ to estimate $\|K\|_\infty$ along the flow.  We give the estimate in the form below with applications to related problems in mind:

\begin{Thm}
If $(S^4,g(t))$ is a normalized Ricci flow starting from a unit volume positive Yamabe metric $(S^4,g_0)$ with $\int\|W\|^2_{g_0}\ dv_{g_0}<\frac{1}{2000}\pi^2$, then for all $t>0$ and $p_0\in[2,2+\frac{1}{3}]$,
\begin{align}
&\|\|W\|+|E|+|R-\ol{R}|\|_\infty(t)
\leq C\left(1+t^{-\frac{2}{p_0}}\right) e^{-C' t}\|\|W\|+|E|\|_{p_0}(0),
\end{align}
where $C, C'>0$ are dimensional constants and independent of $g_0$. In particular, when $p_0=2$ we have
\begin{align}
&\|\|W\|+|E|+|R-\ol{R}|\|_\infty(t)
\leq C\left(1+t^{-1}\right) e^{-C' t}\|W\|_{2}(0).
\end{align}
\end{Thm}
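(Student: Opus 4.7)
The plan is to apply Moser's weak maximum principle (Proposition \ref{DPW}) to the shifted quantity $\hat K = K + C^{-1}\|K\|_2^2(0)e^{-Ct}$ introduced in the preceding discussion, which by \eqref{hateq} satisfies $\frac{\partial}{\partial t}\hat K \leq \Delta \hat K + b\hat K$ with $b = C\hat K + C$. The key idea is to split the time axis into a small-time regime $0<t\leq 1$, where one recovers the $t^{-2/p_0}$ singular factor by applying Moser's principle on $[0,t]$ directly, and a large-time regime $t>1$, where one recovers the exponential decay $e^{-C't}$ by applying the principle on the shifted interval $[t-1,t]$.

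To verify the hypotheses of Proposition \ref{DPW} with $n=4$, I would fix some $q\in(2,\tfrac{7}{3}]$ so that $q>n/2$. The uniform Sobolev constants $A,B$ are supplied by Proposition \ref{usob}. The bound on $\beta=\sup_t(\|b\|_q + \|h_t\|_q)$ follows from \eqref{decKhat}, which gives $\|\hat K\|_q(t)\leq Ce^{-Ct}\|\hat K\|_q(0)$, together with the observation that the volume-form evolution factor $h_t=\ol R-R$ satisfies $\|h_t\|_q\leq\|K\|_q$, also uniformly bounded along the normalized flow by Proposition \ref{final}. This exponential decay of the $L^q$ norms is precisely what guarantees uniform control of $\beta$ over all time.

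With $(n+2)/(2p_0)=3/p_0$, applying Proposition \ref{DPW} to $\hat K$ on $[0,t]$ for $0<t\leq 1$ yields
\begin{align}
\hat K(x,t)\leq Ct^{-3/p_0}\left(\int_0^t\!\!\int \hat K^{p_0}\,dv\,ds\right)^{1/p_0}\leq Ct^{-3/p_0}\cdot t^{1/p_0}\|\hat K\|_{p_0}(0)=Ct^{-2/p_0}\|\hat K\|_{p_0}(0),\notag
\end{align}
using $\sup_{s\in[0,t]}\|\hat K\|_{p_0}(s)\leq C\|\hat K\|_{p_0}(0)$. For $t>1$, applying the principle on the time-shifted interval $[t-1,t]$ bounds the bracket by a constant while the integrand is controlled by $\sup_{s\in[t-1,t]}\|\hat K\|_{p_0}(s)\leq Ce^{-C(t-1)}\|\hat K\|_{p_0}(0)$ via \eqref{decKhat}. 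Combining both regimes gives $\hat K(x,t)\leq C(1+t^{-2/p_0})e^{-C't}\|\hat K\|_{p_0}(0)$.

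Finally, since $K\leq \hat K$ pointwise and $g_0$ is a Yamabe metric with constant scalar curvature (so $\|R-\ol R\|_{p_0}(0)=0$), we have $\|\hat K\|_{p_0}(0)\leq \|\|W\|+|E|\|_{p_0}(0)+C\|K\|_2^2(0)$; using $\vol(g_0)=1$, $p_0\geq 2$, and the smallness hypothesis, the quadratic term is absorbed into a multiple of $\|\|W\|+|E|\|_{p_0}(0)$, giving the general bound. For the $p_0=2$ special case, Proposition \ref{cons1} \eqref{EW} gives $\|E\|_2(0)\leq\sqrt{2}\|W\|_2(0)$, so $\|\|W\|+|E|\|_2(0)\leq C\|W\|_2(0)$, producing the stated form. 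The main technical obstacle is ensuring the $L^q$ hypotheses of Proposition \ref{DPW} remain uniform over arbitrarily large times---this is exactly what the exponential decay from Proposition \ref{final} provides---coupled with the correct interpretation of the $1/t$ factor in the Moser bound as measured from the \emph{start} of the interval, which is what forces the time shift $[t-1,t]$ in the large-time regime.
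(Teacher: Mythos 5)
Your overall strategy is the same as the paper's: apply Yang's parabolic Moser estimate (Proposition \ref{DPW}) to $\hat{K}$ with $b=\hat{K}+C$, obtain the exponential factor from the decay \eqref{decKhat} of $\|\hat{K}\|_{p_0}$ on a trailing time interval ending at $t$, and read off the $t^{-2/p_0}$ rate from the $\bigl[\cdots+1/t\bigr]^{3/p_0}$ bracket combined with the $\bigl(\int\!\!\int \hat{K}^{p_0}\bigr)^{1/p_0}$ factor. Your split into $[0,t]$ for $t\leq 1$ and $[t-1,t]$ for $t>1$ versus the paper's single interval $[t/2,t]$ is an immaterial difference, and your closing conversions of $\|\hat{K}\|_{p_0}(0)$ into $\|\,\|W\|+|E|\,\|_{p_0}(0)$ (using $R_{g_0}\equiv\ol{R}$ and the dimensional bound on $\|K\|_2(0)$) are correct.

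There is, however, one genuine gap: your control of $\beta=\sup(\|b\|_q+\|h_t\|_q)$. You bound it via \eqref{decKhat} by $\|\hat{K}\|_q(0)$ for some $q\in(2,\tfrac{7}{3}]$. But the hypotheses only control $\|W\|_2(0)$ (and hence $\|K\|_2(0)$); the initial $L^q$ norm of the curvature for $q>2$ can be arbitrarily large even when $\int\|W\|^2\,dv_{g_0}$ is small, so a constant depending on $\|\hat{K}\|_q(0)$ is \emph{not} a dimensional constant independent of $g_0$, contrary to what the theorem asserts. This is precisely where the smoothing estimate \eqref{Kdecay}, $\|K\|_q(s)\leq C s^{-(q-2)/q}\|K\|_2(0)$, is needed: on the trailing interval $[t/2,t]$ it gives $\beta\leq C\bigl(1+t^{-(q-2)/q}\bigr)$ with $C$ depending only on $\|K\|_2(0)$, and then $\beta^{2q/(2q-4)}\leq C(1+t^{-1})$ is absorbed into the existing $1/t$ term of the bracket in \eqref{Yangestimate} without worsening the final power $t^{-2/p_0}$. (For your large-time regime $[t-1,t]$ the same estimate makes $\beta$ dimensionally bounded, so that half of your argument survives; it is only the small-time regime that needs this repair, and note that the repair forces you to start the Moser interval at a positive time comparable to $t$ rather than at $0$.)
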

\begin{proof}
Apply Proposition \ref{DPW} with $f=\hat{K}$ and $b=\hat{K}+C$, so that $\beta\leq \|\hat{K}\|_q+C$. Since we have a uniform Sobolev inequality along the flow, we may apply \eqref{Yangestimate} with a fixed choice of $q\in\left(2,2+\frac{1}{3}\right]$ (for instance, take $q=2+\frac{1}{3}$) and any $p_0\in\left[2,2+\frac{1}{3}\right]$ to find that
\begin{align}
\|\hat{K}\|_\infty(t)\leq C\left(1+\|\hat{K}\|_q^{\frac{2q}{2q-4}}+t^{-1}\right)^{\frac{3}{p_0}}t^{\frac{1}{p_0}}e^{-C' t}\|\hat{K}\|_{p_0}(0),\label{finalest}
\end{align}
where we have obtained the extra exponential decay factor by estimating $\|\hat{K}\|_\infty(t)$ by the bounds on $\|\hat{K}\|_{p_0}$ for times in $[t/2,t]$, taking advantage of \eqref{decKhat}. This also allows us to estimate by \eqref{Kdecay} that on $[t/2,t]$,
\begin{align}
\|\hat{K}\|_q^{\frac{2q}{2q-4}}&\leq \left(C+C t^{-\frac{q-2}{q}}\right)^{\frac{2q}{2q-4}}\notag
\\
&\leq C+Ct^{-1}.\notag
\end{align}
Putting this back into \eqref{finalest} and rewriting in terms of $K$ while using that $\|K\|_2(0)$ is bounded by a dimensional constant, the conclusion follows. 
\end{proof}

\subsection{An open question}

Throughout the arguments above we have made extensive use of the Chern--Gauss--Bonnet identity \eqref{CGB}, which is is one reason why all of the results described here are restricted to dimension four. A natural question is whether one can obtain sphere theorems based on conformally invariant hypotheses such as \eqref{Acond} in Theorem A of \cite{CGY03}, but in other dimensions. Recent work of Chen--Wei--Ye \cite{CWY20} can be viewed as a step in this direction:

\begin{Thm}[{\cite[Theorem 1.1]{CWY20}}]\label{CWY}
There exists $\Lambda(n)>0$ such that if $(M^n,g)$ is a compact manifold with a Yamabe metric $g_0\in[g]$ which satisfies
\begin{align}
\|W_{g_0}\|_{n/2}+\|E_{g_0}\|_{n/2}<\Lambda(n) Y(M,[g]),\label{CWYeq}
\end{align}
then $M^n$ is diffeomorphic to an isometric quotient of $S^n$. Moreover, the normalized Ricci flow starting from $(M^n,g_0)$ exists for all times and converges to a quotient of $(S^n,g_c)$.
\end{Thm}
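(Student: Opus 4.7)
The plan is to extend the four-dimensional strategy of Theorem \ref{3main} by working with the scale-invariant $L^{n/2}$ norms of $\|W\|$ and $|E|$, which are the natural analog in dimension $n$ of the $L^2$ norms used in dimension four. The approach proceeds in three main stages: establish monotonicity of an integral pinching quantity along the normalized Ricci flow, upgrade to pointwise pinching by Moser iteration, and then conclude convergence by invoking the pointwise pinching sphere theorems of Huisken--Margerin--Nishikawa.

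First I would derive the evolution inequalities for $\|W\|^{n/2}$ and $|E|^{n/2}$ under the (normalized) Ricci flow. Schematically,
\begin{align}
\partial_t \|W\|^{n/2} &\leq \Delta \|W\|^{n/2} - c_n\bigl|\nabla \|W\|^{n/4}\bigr|^2 + C_n(\|W\|+|E|)\,\|W\|^{n/2},\notag
\\
\partial_t |E|^{n/2} &\leq \Delta |E|^{n/2} - c_n\bigl|\nabla |E|^{n/4}\bigr|^2 + C_n(\|W\|+|E|)\,|E|^{n/2} + c_n' R\,|E|^{n/2}.\notag
\end{align}
After integrating and applying the conformally invariant Sobolev inequality
\begin{align}
Y(M,[g])\,\|u\|_{2n/(n-2)}^2 \leq \tfrac{4(n-1)}{n-2}\int |\nabla u|^2\, dv + \int R u^2\, dv\notag
\end{align}
with $u=\|W\|^{n/4}$ or $u=|E|^{n/4}$, H\"older's inequality bounds the cubic curvature contributions by a constant multiple of $(\|W\|_{n/2}+\|E\|_{n/2})\,\|u\|_{2n/(n-2)}^2$. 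Under the hypothesis \eqref{CWYeq}, these can be absorbed into the $Y(M,[g])\,\|u\|_{2n/(n-2)}^2$ term, producing monotonic decay of
\begin{align}
F_{n/2}(t)=\int \bigl(\|W\|^{n/2}+c\,|E|^{n/2}\bigr)\, dv_{g(t)}\notag
\end{align}
along the flow. The monotonicity itself preserves the hypothesis, so $Y(M,[g(t)])$ remains uniformly bounded below, and after passing to the normalized flow the decay becomes exponential.

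Next I would apply the $n$-dimensional version of Moser iteration (Proposition \ref{DPW}), together with a uniform Sobolev inequality derived as in Proposition \ref{usob}, to upgrade this exponential $L^{n/2}$ decay to exponential $L^\infty$ decay of $\|W\|$ and $|E|$. The resulting curvature bounds give long-time existence of the normalized Ricci flow. Once $\|W\|+|E|$ becomes pointwise small relative to $R$, one enters the weak-pinching regime of Huisken--Margerin--Nishikawa, whose classical results then furnish convergence of $(M^n, g(t))$ to a metric of constant positive sectional curvature, necessarily isometrically covered by $(S^n,g_c)$. The diffeomorphism classification follows from the existence of this smooth family.

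The main obstacle will be the absence of the special dimension-four identities underpinning Theorem \ref{3main}: the Chern--Gauss--Bonnet formula \eqref{CGB}, the exact conformal invariance of $\int \|W\|^2\, dv$, and the corresponding Yamabe lower bound (Proposition \ref{cons1}) are all specific to $n=4$. For general $n$, the quantity $\|E\|_{n/2}$ cannot be controlled by $\|W\|_{n/2}$ alone, which is precisely why the hypothesis \eqref{CWYeq} must couple both. The technical heart of the argument is thus the careful tracking of the dimensional constants $c_n, C_n, c_n'$ in the curvature evolution and Sobolev estimates, and the choice of $\Lambda(n)$ small enough that every cross term in the coupled evolution of $\|W\|^{n/2}$ and $|E|^{n/2}$ can be absorbed simultaneously by the good gradient and $Y(M,[g])$ terms, without relying on any four-dimensional cancellation.
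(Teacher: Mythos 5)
First, a point of reference: this statement is not proved in the paper at all --- it is Theorem 1.1 of Chen--Wei--Ye \cite{CWY20}, quoted in the closing subsection purely as context for an open question, so there is no in-paper proof to compare against. Judged on its own terms, your outline has the right broad architecture (integral monotonicity of the scale-invariant $L^{n/2}$ curvature norms, Moser iteration to pass to $L^\infty$, convergence), and you correctly identify the central obstruction: the four-dimensional inputs --- Chern--Gauss--Bonnet \eqref{CGB}, the conformal invariance of $\int\|W\|^2\,dv$, and the resulting Yamabe lower bound of Proposition \ref{cons1} \eqref{Ybound} --- are all unavailable for general $n$.

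The gap is that, having named this obstruction, you do not resolve it, and it bites at exactly the step you assert without argument: ``the monotonicity itself preserves the hypothesis, so $Y(M,[g(t)])$ remains uniformly bounded below.'' As written this is circular. To absorb the cubic terms at time $t$ you need $\|W\|_{n/2}+\|E\|_{n/2}<\Lambda(n)\,Y(M,[g(t)])$ at time $t$, i.e.\ a lower bound on the Yamabe invariant of the \emph{evolving} conformal class; monotone decay of the left-hand side gives you nothing unless you separately prevent $Y(M,[g(t)])$ from degenerating. In dimension four that lower bound is exactly what Proposition \ref{cons1} \eqref{Ybound} supplies, via Chern--Gauss--Bonnet and the conformal invariance of $\int\|W\|^2\,dv$; in general dimension neither tool exists, and smallness of $\|W\|_{n/2}+\|E\|_{n/2}$ does not by itself bound $Y(M,[g(t)])$ from below. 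This is why the argument in \cite{CWY20} does not run the absorption against the evolving Yamabe invariant but instead works with a Sobolev constant that is controlled uniformly along the Ricci flow by other means (uniform Sobolev/log-Sobolev inequalities along the flow in the spirit of R.~Ye and Q.~Zhang, resting on Perelman-type monotonicity and control of the scalar curvature), with $\Lambda(n)$ calibrated against that uniform constant. Without some such replacement your absorption step cannot be closed. A secondary point: finishing by quoting Huisken--Margerin--Nishikawa requires pointwise pinching of the full curvature relative to $R$, hence also a quantitative positive lower bound on $R$ at large times; the direct route of Theorem \ref{3main} --- exponential $L^\infty$ decay of the reduced curvature tensor followed by convergence --- sidesteps this and is closer to how the cited result is actually established.
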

Theorem \ref{CWY} is a sort of generalization to general dimensions of Theorem \ref{3main}, which holds when $n=4$.  Unfortunately, unlike the condition on $\int |W|^2\ dv$ in four dimensions, the condition \eqref{CWYeq} above, although scaling invariant, is not conformally invariant; rather, it depends on checking the inequality at a special choice of metric in $[g]$ (a Yamabe metric). It would be very interesting to see if \eqref{CWYeq} could be improved to a truly conformally invariant inequality, such as for instance by comparing the left-hand side for a Yamabe metric to the infimum over all metrics in $[g]$.

\bibliographystyle{alpha}
\bibliography{references}

\begin{thebibliography}{CGY02b}

\bibitem[AK04]{AK}
Sigurd Angenent and Dan Knopf.
\newblock An example of neckpinching for {R}icci flow on {$S^{n+1}$}.
\newblock {\em Math. Res. Lett.}, 11(4):493--518, 2004.

\bibitem[CGY02a]{CGY2}
Sun-Yung {Chang}, Matthew~J. {Gursky}, and Paul {Yang}.
\newblock {An a priori estimate for a fully nonlinear equation on
  four-manifolds.}
\newblock {\em {J. Anal. Math.}}, 87:151--186, 2002.

\bibitem[CGY02b]{CGY1}
Sun-Yung~A. Chang, Matthew~J. Gursky, and Paul~C. Yang.
\newblock An equation of {M}onge-{A}mp\`ere type in conformal geometry, and
  four-manifolds of positive {R}icci curvature.
\newblock {\em Ann. of Math. (2)}, 155(3):709--787, 2002.

\bibitem[CGY03]{CGY03}
Sun-Yung~A. {Chang}, Matthew~J. {Gursky}, and Paul~C. {Yang}.
\newblock {A conformally invariant sphere theorem in four dimensions}.
\newblock {\em {Publ. Math., Inst. Hautes \'Etud. Sci.}}, 98:105--143, 2003.

\bibitem[CGZ20]{CGZ}
Sun-Yung~A. Chang, Matthew Gursky, and Siyi Zhang.
\newblock A conformally invariant gap theorem characterizing {$\Bbb{CP}^2$} via
  the {R}icci flow.
\newblock {\em Math. Z.}, 294(1-2):721--746, 2020.

\bibitem[CPY21]{CPY21}
Sun-Yung~Alice {Chang}, Eden {Prywes}, and Paul {Yang}.
\newblock {Quasiconformal Flows on non-Conformally Flat Spheres}.
\newblock {\em arXiv e-prints}, page arXiv:2107.02785, July 2021.

\bibitem[CWY21]{CWY20}
Eric {Chen}, Guofang {Wei}, and Rugang {Ye}.
\newblock {Ricci flow and a sphere theorem for $L^{n/2}$-pinched Yamabe
  metrics}.
\newblock {\em Adv. Math.}, to appear, 2021.

\bibitem[Der83]{De}
Andrzej Derdzi\'{n}ski.
\newblock Self-dual {K}\"{a}hler manifolds and {E}instein manifolds of
  dimension four.
\newblock {\em Compositio Math.}, 49(3):405--433, 1983.

\bibitem[Gur94]{Gursky}
Matthew~J. Gursky.
\newblock Locally conformally flat four- and six-manifolds of positive scalar
  curvature and positive {E}uler characteristic.
\newblock {\em Indiana Univ. Math. J.}, 43(3):747--774, 1994.

\bibitem[Gur98]{Gu00}
Matthew~J. Gursky.
\newblock The {W}eyl functional, de {R}ham cohomology, and
  {K}\"{a}hler-{E}instein metrics.
\newblock {\em Ann. of Math. (2)}, 148(1):315--337, 1998.

\bibitem[Ham82]{Hamilton}
Richard~S. Hamilton.
\newblock Three-manifolds with positive {R}icci curvature.
\newblock {\em J. Differential Geometry}, 17(2):255--306, 1982.

\bibitem[Ham86]{Ha}
Richard~S. Hamilton.
\newblock Four-manifolds with positive curvature operator.
\newblock {\em J. Differential Geom.}, 24(2):153--179, 1986.

\bibitem[Hui85]{Hu}
Gerhard Huisken.
\newblock Ricci deformation of the metric on a {R}iemannian manifold.
\newblock {\em J. Differential Geom.}, 21(1):47--62, 1985.

\bibitem[HV96]{HV}
Emmanuel Hebey and Michel Vaugon.
\newblock Effective {$L_p$} pinching for the concircular curvature.
\newblock {\em J. Geom. Anal.}, 6(4):531--553 (1997), 1996.

\bibitem[KW75]{KW}
Jerry~L. Kazdan and F.~W. Warner.
\newblock Scalar curvature and conformal deformation of {R}iemannian structure.
\newblock {\em J. Differential Geometry}, 10:113--134, 1975.

\bibitem[Mar86]{Ma}
Christophe Margerin.
\newblock Pointwise pinched manifolds are space forms.
\newblock In {\em Geometric measure theory and the calculus of variations
  ({A}rcata, {C}alif., 1984)}, volume~44 of {\em Proc. Sympos. Pure Math.},
  pages 307--328. Amer. Math. Soc., Providence, RI, 1986.

\bibitem[ST69]{ST}
I.~M. Singer and J.~A. Thorpe.
\newblock The curvature of {$4$}-dimensional {E}instein spaces.
\newblock In {\em Global {A}nalysis ({P}apers in {H}onor of {K}. {K}odaira)},
  pages 355--365. Univ. Tokyo Press, Tokyo, 1969.

\bibitem[Yan92]{Yang}
Deane Yang.
\newblock {$L^p$} pinching and compactness theorems for compact {R}iemannian
  manifolds.
\newblock {\em Forum Math.}, 4(3):323--333, 1992.

\end{thebibliography}

\end{document}